\newcommand{\Tr}{\text{Tr}}
\newcommand{\abs}[1]{\lvert#1\rvert}
\newcommand{\RR}{\mathbb{R}}
\newcommand{\CC}{\mathbb{C}}
\newtheorem{remark}[theorem]{Remark}
\global\long\def\ve{\varepsilon}
\global\long\def\R{\mathbb{R}}
\global\long\def\ra{\rightarrow}
\global\long\def\Tr{\text{Tr}}
\numberwithin{equation}{section}
\numberwithin{figure}{section}
\providecommand{\corollaryname}{Corollary}
\providecommand{\lemmaname}{Lemma}
\providecommand{\propositionname}{Proposition}
\providecommand{\remarkname}{Remark}
\providecommand{\theoremname}{Theorem}
\title{Semidefinite relaxation of multi-marginal \\ optimal transport for strictly correlated
electrons in second quantization}
\author{Yuehaw Khoo\thanks{Department of Statistics, University of Chicago, Illinois, IL 60637, USA. Email: {\tt ykhoo@uchicago.edu}
}
\and
Lin Lin\thanks{Department of Mathematics, University of California, Berkeley, and Computational Research Division, Lawrence Berkeley National Laboratory, Berkeley, CA 94720, USA.  Email: \texttt{linlin@math.berkeley.edu}}
\and
Michael Lindsey\thanks{Department of Mathematics, University of California, Berkeley, CA 94720, USA.  Email: \texttt{lindsey@math.berkeley.edu}}
\and
Lexing Ying\thanks{Department of Mathematics and Institute for Computational and Mathematical Engineering,
Stanford University, Stanford, CA 94305, USA. Email: {\tt lexing@stanford.edu}
}
}
\begin{document}

\maketitle

\begin{abstract}
We consider the strictly correlated electron (SCE) limit of the
fermionic quantum many-body problem in the second-quantized formalism.
This limit gives rise to a multi-marginal optimal transport (MMOT)
problem. Here the marginal state space for our MMOT problem is the
binary set $\{0,1\}$, and the number of marginals is the number $L$ of
sites in the model.  The costs of storing and computing the exact
solution of the MMOT problem both scale exponentially with respect to
$L$.  We propose an efficient convex relaxation to the MMOT which can be solved by
semidefinite programming (SDP). In particular, the semidefinite
constraint is only of size $2L\times 2L$.  We further prove that the SDP has dual attainment, in spite of the lack of Slater's condition (i.e.  the primal SDP does not have any strictly feasible point).  In the context of determining the lowest energy of electrons via density functional theory, such dual attainment implies the existence of an effective potential needed to solve a nonlinear Schr\"{o}dinger equation via self-consistent field iteration.  We demonstrate the effectiveness of our methods on computing the ground state energy of spinless and spinful Hubbard-type models. Numerical results indicate that our SDP formulation yields comparable results when using the unrelaxed MMOT formulation. We also describe how our relaxation methods generalize to arbitrary MMOT problems with pairwise cost functions.
\end{abstract}

\section{Introduction}

A central yet formidable task of quantum chemistry is to determine the ground-state energy of many electrons. Although many electronic structure theories have been proposed to tackle this problem with exponential complexity in number of electrons, the Kohn-Sham density functional theory (DFT)~\cite{HohenbergKohn1964,KohnSham1965} remains one of the most popular techniques due to its relatively cheap cost. The Kohn-Sham DFT (KS-DFT) reduces the computational complexity by approximating the Coulombic interaction term (more precisely, the exchange-correlation term) with a functional only depending on the density of the electrons. Therefore, the success of the widely-used KS-DFT hinges on the accuracy of the approximate functionals that capture the Coulombic interactions between the particles. 

In this paper, we consider solving the MMOT problem, which is related to a particular choice of the functional, the strictly correlated electron (SCE) functional $E_\text{sce}$ \cite{SeidlPerdewLevy1999}. More precisely, let $\rho\in \mathbb{R}^L$ be a density of $L$ discrete variables normalized to $N$:
\begin{equation}
\rho = [\rho_1,\ldots,\rho_L]^T\geq 0,\quad \sum_{p=1}^L \rho_p = N.
\end{equation}
The SCE functional is defined via the MMOT problem
\begin{eqnarray}
E_\text{sce}[\rho] &
:=& \min_{\mu\in \Pi(\rho)} \sum_{s_1,\ldots,s_L \in \{0,1\}} \sum_{p,q} C_{pq}(s_p,s_q)\mu(s_1,\ldots,s_L) 
\end{eqnarray}
where 
\begin{multline}
\Pi(\rho) = \{\mu\ \vert\  \mu\geq 0,\ \sum_{s_1,\ldots s_L\setminus s_p} \mu(s_1,\ldots,s_L) = \cr \rho_p(s_p)\delta_{s0}(s_p) + (1-\rho_p(s_p))\delta_{s1}(s_p),\ p=1,\ldots,L\},
\end{multline}
i.e. $\mu$ has marginal densities fixed according to $\rho$. For the case considered,
\begin{equation}
C_{pq}(s_p,s_q) = \begin{cases} v_{pq} & \text{if}\ s_p=s_q=1 \\ 0 & \text{otherwise}, \end{cases}
\end{equation}
for specific choice of $v_{pq}$'s. Note that the dimension of the feasible space for the MMOT problem is exponential in $L$, rendering infeasible any direct approach based on the formulation of $E_\text{sce}[\rho]$ as a general MMOT, at least for $L$ of moderate size. The purpose of this paper is thus to propose practical algorithms to evaluate $E_\text{sce}$ and $\nabla_\rho E_\text{sce}[\rho]$. Although the form of $C_{pq}$ and binary state space look rather restrictive, as we shall see the proposed methods can be easily extended to deal with more general cases.

Before moving on, we want to briefly discuss the significance of considering such a functional $E_\text{sce}$. In DFT, although tremendous progress has
been made in the construction of approximate functionals~\cite{PerdewZunger1981,Becke1988,LeeYangParr1988,PerdewBurkeErnzerhof1996},
these approximations are mostly derived by fitting known results for weakly correlated systems, for examples the uniform electron gas, single atoms, small molecules, and perfect crystal systems. Such functionals often perform well when the underlying quantum systems have single-particle energy that is significantly more important than the electron-electron interaction energy.  In order to extend the capability of DFT to the treatment of strongly correlated quantum systems, one recent direction of functional development
considers the limit in which the electron-electron interaction energy is infinitely large compared to other components of the total energy. The resulting limit is known as 
the SCE limit \cite{SeidlPerdewLevy1999,Seidl2007,ButtazzoDePascaleGori-Giorgi2012,MaletGori-Giorgi2012,CotarFrieseckeKlueppelberg2013,lewin2018semi,cotar2018smoothing}.
The SCE limit provides an alternative route to derive exchange-correlation energy functionals.
The study of Kohn-Sham DFT with SCE-based functionals is still in its infancy, but such approaches have already been used to treat strongly
correlated model systems and simple chemical systems (see e.g.~\cite{MendlMaletGori-Giorgi2014,ChenFrieseckeMendl2014,GrossiKooiGiesbertzEtAl2017}). 

\vspace{1mm}\noindent\textbf{Contribution:}

Based on the recent work of the authors~\cite{KhooYingOT}, we propose a convex relaxation approach by
imposing certain necessary constraints satisfied by the 2-marginals. The relaxed problem can be solved efficiently via
semidefinite programming (SDP).  While the 2-marginal formulation
provides a lower bound to the optimal cost of the MMOT problem, we also
propose a tighter lower bound obtained via an SDP involving the
3-marginals.  The computational cost for solving these relaxed problems is polynomial with respect to $L$, and, in particular, 
the semidefinite constraint is only enforced on a matrix of size $2L\times 2L$. 
Numerical results for spinless and spinful Hubbard-type
systems, demonstrate that the 2-marginal and 3-marginal relaxation
schemes are already quite tight, especially when compared to the
modeling error due to the Kohn-Sham SCE formulation itself.

By solving the dual problems for our SDPs, we can obtain the Kantorovich
dual potentials, which yield the SCE potential needed for
carrying out the self-consistent field iteration (SCF) in the Kohn-Sham
SCE formalism. To this end we need to show that the dual problem
satisfies strong duality and moreover that the dual optimizer is actually attained. We show that a straightforward
formulation of the primal SDP does not have any strictly feasible point,
and hence Slater's condition cannot be directly applied to establish
strong duality (see, e.g.,~\cite{BoydVandenberghe2004}). By a careful study of the structure of the dual problem, we prove that the strong duality and dual attainment conditions are indeed satisfied.   We also explain how the SDP relaxations introduced in this paper can be applied to arbitrary MMOT problems with pairwise cost functions. We comment that the justification of the strong duality and dual attainment conditions holds in this more general setting as well.

\vspace{1mm}\noindent\textbf{Related work:}

A system of $N$ interacting electrons in a $d$-dimensional space can be
described using either the first-quantized or the second-quantized
representation. In the first-quantized representation, the number of
electrons $N$ is fixed, and the electronic wavefunction is an
anti-symmetric function in $\bigwedge^{N} {L}^2(\RR^{d};\CC^2)$, which
is a subset of the tensor product space $\bigotimes^{N}
{L}^2(\RR^{d};\CC^2)$. Here $\CC^2$ corresponds to the spin degree of
freedom. In first quantization, the anti-symmetry condition needs to be
treated explicitly.  By contrast, in the second-quantized formalism, one
chooses a basis for a subspace of ${L}^2(\RR^{d};\CC^2)$. In practice,
the basis is of some finite size $L$, corresponding to a discretized
model with $L$ sites that encode both spatial and spin degrees of
freedom. The electronic wavefunction is an element of the Fock space
$\mathcal{F}\cong \CC^{2^{L}}$. The Fock space contains wavefunctions of
all possible electron numbers, and finding wavefunctions of the desired
electron number is achieved by constraining to a subspace of the Fock
space. In the second-quantized representation, the anti-symmetry
constraint is in some sense baked into the Hamiltonian operator instead
of the wavefunction, and this perspective often simplifies book-keeping
efforts. Due to the inherent computational difficulty of studying
strongly correlated systems such as high-temperature superconductors, it
is often necessary to introduce simplified Hamiltonians such as in
Hubbard-type models. These model problems are formulated directly in the
second-quantized formalism via specification of an appropriate
Hamiltonian.

To the extent of our knowledge, all existing works on SCE treat
electrons in the first-quantized representation with (essentially) a
real space basis. In this paper we aim at studying the SCE limit in the
second-quantized setting. Note that generally Kohn-Sham-type theories in
the second-quantized representation are known as `site occupation
functional theory' (SOFT) or `lattice density functional theory' in the physics
literature~\cite{SchoenhammerGunnarssonNoack1995,LimaOliveiraCapelle2002,CapelleCampo2013,SenjeanNakataniTsuchiizuEtAl2018,Coe2019}.
A crucial assumption of this paper is that the electron-electron interaction takes
the form $\sum_{p,q=1}^{L} v_{pq} \hat{n}_{p} \hat{n}_{q}$, which we call the generalized Coulomb interaction. (The meaning of the symbols will be explained in
Section~\ref{sec:prelim}.) We remark that the form of the generalized Coulomb interaction is more restrictive than the general
form $\sum_{p,q,r,s=1}^{L} v_{pqrs} \hat{a}^{\dagger}_{p} \hat{a}^{\dagger}_{q} \hat{a}_{s}\hat{a}_r$ appearing in the
quantum chemistry literature, to which our formulation does not yet apply.
Assuming a generalized Coulomb interaction, we demonstrate that the corresponding SCE problem can be formulated as a multi-marginal optimal transport (MMOT) problem over classical probability measures on the binary hypercube $\{0,1\}^L$. The cost function in this problem is of pairwise form. Hence the 
objective function in the Kantorovich formulation of the MMOT can be
written in terms of only the 2-marginals of the probability measure. In
order to solve the MMOT problem directly, even the storage cost of the
exact solution scales as $2^L$, and the computational cost also scales
exponentially with respect to $L$.  Thus a direct approach becomes impractical even when
the number of sites becomes moderately large.

In the first-quantized formulation, for a fixed real-space discretization the computational cost of the direct solution of the SCE problem   
scales exponentially with respect to the number of electrons $N$. This curse of dimensionality is a serious obstacle for 
SCE-based approaches to the quantum many-body problem. Although remarkably \cite{friesecke2018breaking,vogler2019kantorovich} show that the solution to the discretized SCE problem is sparse, finding such a sparse solution in a high dimensional space is still difficult. We note that there are exceptional cases, for example the strictly one-dimensional systems (i.e., $d=1$) and spherically symmetric systems (for any $d$)~\cite{Seidl2007}, for which semi-analytic solutions exist. 

In \cite{BenamouCarlierNenna2016}, the Sinkhorn scaling approach is applied to an entropically regularized MMOT problem. This
method requires the marginalization of a probability measure on a product space of size that is exponential in the number of electrons $N$. Thus the complexity of this method also scales exponentially with respect to $N$. Meanwhile, a method based on the Kantorovich dual of the MMOT problem was proposed in \cite{ButtazzoDePascaleGori-Giorgi2012,MendlLin2013}. However, there are exponentially many constraints in the dual problem. Furthermore,
\cite{ButtazzoDePascaleGori-Giorgi2012} assumes a Monge solution to the MMOT problem, but it is unknown whether the MMOT
problem with pairwise Coulomb cost has a Monge solution for $d=2,3$. Moreover, if it exists, the Monge solution is hard to evaluate in the context of the Coulomb cost. 

Another line of work that is similar in spirit to the proposed method focuses on reducing the search space of the MMOT problem to the set of $N$-representable 2-marginals in the setting of first quantization. Such a set is then relaxed to yield larger convex domains allowing for more efficient optimization. This type of method provides a lower bound to the SCE energy. In~\cite{friesecke2013n}, the $N$-representability constraint is relaxed to a $K$-representability constraint where $K< N$, and the resulting relaxation is solved by linear programming. To further improve the approximation quality, the aforementioned work~\cite{KhooYingOT} of two of the authors proposes a semidefinite relaxation-based approach to the MMOT problem arising from SCE in the first-quantized setting~\cite{KhooYingOT}. Furthermore, by proper treatment of the 3-marginal distributions, an upper bound to the SCE
energy is recovered as well. Numerical results indicate that both
the lower and upper bounds are rather tight approximations to
the SCE energy.

In order to use $E_\text{SCE}$ in the context of KS-DFT, the duality theory for the MMOT problem needs to be established. The line of work pursued in \cite{de2015optimal,di2017optimal,gerolin2019duality,colombo2019continuity} establishes the existence of dual maximizers for the repulsive MMOT problem in the first-quantized setting. As we shall see, in second quantization we only need to deal with a discrete MMOT problem where the existence of a dual maximizer is guaranteed by linear programming duality. However, the relaxation that we propose is no longer a linear program, so in this paper we must develop a duality theory for the proposed SDP.

In the second-quantized setting, our semidefinite relaxation-based approach for finding a lower bound to the
SCE energy is also related to the two-particle reduced density matrix
(2-RDM) theories in quantum chemistry~\cite{Coleman1963,Mazziotti1998,Mazziotti2004,Mazziotti2012,nakata2001variational}. However, the MMOT problem in SCE only requires the knowledge of the pair density instead of the entire 2-RDM. The number of constraints in our formulation is also considerably smaller than the number of constraints in 2-RDM theories, thanks to the generalized Coulomb form of the interaction.

\noindent\textbf{Organization:}
In Section~\ref{sec:prelim}, we briefly describe an appropriate formulation of Kohn-Sham DFT based on the SCE functional, which is in turn defined in terms of a MMOT problem. In Section~\ref{sec:two marginal
convex}, we solve the MMOT problem by
introducing a convex relaxation of the set of representable 2-marginals, and we prove strong duality for the relaxed problem. In
Section~\ref{sec:three marginal convex}, a tighter lower bound is
obtained by considering a convex relaxation of the set of representable 3-marginals.
In Section~\ref{sec:general OT}, we comment on how a general
MMOT problem with pairwise cost can be
solved by directly applying the methods introduced in  Sections~\ref{sec:two
marginal convex} and \ref{sec:three
marginal convex}. We demonstrate the effectiveness of the
proposed methods through numerical experiments in Section~\ref{sec:numeric}, and we discuss conclusions and future
directions in Section~\ref{sec:conclusion}.
For completeness, the background of KS-DFT and SCE functionals is introduced in the appendices.

\section{Density functional theory in second quantization}\label{sec:prelim}


In this section, we describe how the computation of $E_\text{sce}[\rho]$ and $\nabla_\rho E_\text{sce}[\rho]$ arises when computing the energy of second quantized electron using KS-DFT. Readers interested in the complete details are referred to the appendix. 

Let the wavefunctions of $N$ electrons be 
\begin{equation}
\Phi = \begin{bmatrix} \varphi_1& \ldots & \varphi_N \end{bmatrix} \in \mathbb{R}^{L\times N}.
\end{equation}
The KS-DFT states that the ground state energy of $N$ electrons can be obtained by solving a nonlinear Schr\"{o}dinger equation with a particular choice of effective potential $V_\text{eff}[\rho]$:
\begin{gather}
\label{eqn:KSSCE_eqn}
t \varphi_i+ \text{diag}\left(w+\nabla_\rho V_\text{eff}[\rho]\right)\varphi_i = \varepsilon_i \varphi_i,\quad i=1,\ldots,N\cr
\rho = \diag(\Phi \Phi^*),\quad \varphi^*_i \varphi_j = \delta_{ij},\ \forall i,j=1,\ldots,N.
\end{gather}
Here $t\in \mathbb{R}^{L\times L}$ is a kinetic energy operator that has the form of a graph adjacency matrix, where $t_{pq}\neq 0$ indicates possible hopping between site $p$ and $q$. The vector $w\in \mathbb{R}^L$ resembles an external potential on the electrons and the effective potential functional $V_\text{eff}[\rho]$ models the Coulombic interactions between the electrons. Although there are many choices for $V_\text{eff}[\rho]$, the goal of this paper is to consider the SCE limit where
\begin{equation}
    V_\text{eff}[\rho] = E_\text{sce}[\rho].
\end{equation}
In $E_\text{sce}[\rho]$, a nonzero $v_{pq}$ implies the existence of Coulombic repulsion between site $p$ and site $q$. Obtaining the derivatives $\nabla_\rho E_\text{sce}[\rho]$ amounts to solving for the dual optimizers of the MMOT via linear programming duality.

Eq.~\eqref{eqn:KSSCE_eqn} is a nonlinear eigenvalue problem and should be solved self-consistently. The standard iterative procedure for this task works
as follows:
\begin{enumerate}
\item For the $k$-th iterate $\rho^{(k)}$, compute $\nabla_\rho E_\text{sce}[\rho^{(k)}]$. 
\item Solve for $\{\varphi_i^{(k+1)}\}^L_{i=1}$ which satisfies:
\begin{gather}
t\varphi_i^{(k+1)} + \text{diag}\left(w+\nabla_\rho E_\text{sce}[\rho^{(k)}]\right)\varphi_i^{(k+1)} = \varepsilon_i \varphi_i^{(k+1)},\quad i=1,\ldots,N\cr
{\varphi_i^{(k+1)}}^* \varphi^{(k+1)}_j = \delta_{ij},\ \forall i,j=1,\ldots,N.
\end{gather}
and let $\rho^{(k+1)} := \mathrm{diag}(\Phi^{(k+1)} \Phi^{(k+1)*})$. 
\item Iterate until convergence, possibly using mixing
schemes~\cite{Anderson1965,Pulay1980,LinYang2013} to ensure or accelerate convergence. 
\end{enumerate}
Once self-consistency is reached and the iterations converge to $\rho^\star$, the total energy can be recovered by the
relation 
\begin{equation}
  E_{\text{KS-SCE}} = \sum_{k=1}^{N} \varepsilon_{k} - \nabla_\rho E_{\text{sce}}[\rho^\star]^T \rho^\star  + E_{\text{sce}}[\rho^\star].
  \label{eqn:KSSCEenergy}
\end{equation}
Readers interested in the background on how such a nonlinear eigenvalue problem arises are referred to Appendix~\ref{sec: background}. 

\section{Convex relaxation}
\label{sec:two marginal convex}
In this section, we discuss an SDP relaxation, $E^\text{sdp}_\text{sce}[\rho]$, to $E_\text{sce}[\rho]$ which has a polynomial problem size in $L$. Furthermore, we establish dual attainment for $E^\text{sdp}_\text{sce}[\rho]$ in order to show $\nabla_\rho E^\text{sdp}_\text{sce}[\rho]$ can indeed be obtained.

Let the 1-marginals
\begin{equation}
\mu^{(1)}_{p}(s_{p}) :=
\sum_{s_1,\ldots,s_L\backslash\{s_{p}\}} \mu(s_1,\ldots,s_L)
  \label{eqn:onemarginal}
\end{equation}
satisfy
\begin{equation}
  \mu^{(1)}_{p}(s) = (1-\rho_{p}) \delta_{s0} + \rho_{p} \delta_{s1}, \quad s=0,1
  \label{eqn:onemarginal_condition0}.
\end{equation}
We also have the 2-marginals $\mu^{(2)}_{pq}$ defined implicitly in terms of $\mu$ by marginalizing out all components other than $p,q$, i.e., by 
\begin{equation}
\mu^{(2)}_{pq}(s_{p},s_{q}) :=
\sum_{s_1,\ldots,s_L\backslash\{s_{p},s_{q}\}} \mu(s_1,\ldots,s_L).
  \label{eqn:twomarginal}
\end{equation}
which is identified as the $2\times 2$ matrix 
\begin{equation}
\mu^{(2)}_{pq} = \begin{bmatrix} \mu^{(2)}_{pq}(0,0) & \mu^{(2)}_{pq}(0,1) \\ \mu^{(2)}_{pq}(1,0) &
  \mu^{(2)}_{pq}(1,1) \end{bmatrix}.
\end{equation}

Despite the fact that it is possible to formulate $E_\text{sce}[\rho]$ as a general MMOT problem which is a linear program over $\mu$, the \emph{pairwise} cost: 
\[
C(s_1,\ldots,s_L) = \sum_{p\neq q} C_{pq} (s_p,s_q).
\]
allows one to write $E_\text{sce}[\rho]$ as
\begin{equation}
\label{2mar optimization}
E_{\text{sce}}[\rho] = \inf_{\mu \in \Pi(\rho)} 
\sum_{p\neq q} \sum_{s_p,s_q} C_{pq}(s_p,s_q)\mu^{(2)}_{pq}(s_p,s_q),
\end{equation}
or in matrix notation
\begin{equation}
\label{2mar optimization 2}
E_{\text{sce}}[\rho] = \inf_{\mu \in \Pi(\rho)} 
\sum_{p\neq q} \Tr[C_{pq} \mu^{(2)}_{pq}],
\end{equation}
where `Tr' indicates the matrix trace.

At first glance, it might seem that one may achieve a significant reduction of complexity
by directly changing the optimization variable in Eq.~\eqref{2mar optimization} from $\mu$ to
$\{\mu^{(2)}_{pq}\}_{p,q=1}^L$. However, extra constraints would then need to be
enforced in order to relate the different 2-marginals; i.e., the two-marginals must be jointly \emph{representable} in the sense that all of them could simultaneously be yielded from a single joint probability measure on $\{0,1\}^L$.

In this section, we show that a relaxation of the representability condition implicit in Eq.~\eqref{2mar optimization} allows us to formulate a tractable optimization problem in terms
of the $\{\mu^{(2)}_{pq}\}_{p,q=1}^L$ alone. In fact, this optimization problem will be a
semidefinite program (SDP).

\subsection{Primal problem}
We now derive certain necessary constraints satisfied by 2-marginals $\{\mu^{(2)}_{pq}\}_{p,q=1}^L$ that are obtained from a probability measure $\mu$ on $\{0,1\}^L$.
In the following we adopt the notation 
\[
\mathbf{s} = (s_1,\ldots,s_L) \in \{0,1\}^L.
\]
Then for any such $\mathbf{s}$, let $e_{\mathbf{s}}:\{0,1\}^L \ra \R$ be the Dirac probability mass function on $\{0,1\}^L$ localized at $\mathbf{s}$, i.e., 
\[
e_{\mathbf{s}}(\mathbf{s}') = \delta_{\mathbf{s},\mathbf{s}'}.
\]
Note that we can also write $e_{\mathbf{s}}$ as an $L$-tensor, i.e., an element of $\R^{2\times 2\times \cdots \times 2}$, via
\[
e_{\mathbf{s}} = e_{s_1} \otimes \cdots \otimes e_{s_L},
\]
where we adopt the (zero-indexing) convention $e_0 = [1,0]^\top$, $e_1 = [0,1]^\top$.

Any probability measure on $\{0,1\}^L$ can be written as a convex combination of the $e_{\mathbf{s}}$ since they are the extreme points of the set of probability measures; in particular we can write a probability density $\mu \in \Pi(\rho)$ as 
\begin{equation}
\mu = \sum_{\mathbf{s}} a_{\mathbf{s}} e_{\mathbf{s}}, \ \ \text{where}\ \  \sum_{\mathbf{s}} a_{\mathbf{s}} = 1,\  a_{\mathbf{s}} \geq 0.
\end{equation}
From the
definitions of the 1- and 2-marginals~\eqref{eqn:onemarginal}, \eqref{eqn:twomarginal}, it follows that
\begin{equation}
\mu^{(1)}_{p} = \sum_{\mathbf{s}}  a_{\mathbf{s}}\, e_{s_p}, \quad \mu^{(2)}_{pq} =
\sum_{\mathbf{s}}  a_{\mathbf{s}} \,e_{s_p}\otimes e_{s_q} = \sum_{\mathbf{s}}  a_{\mathbf{s}} \,e_{s_p} e_{s_q}^\top .
\label{eqn:tensorMarginals}
\end{equation}
Now define 
\begin{equation}
M = M(\{a_{\mathbf{s}}\}) = \sum_{\mathbf{s}} a_{\mathbf{s}} 
\begin{bmatrix} e_{\mathbf{s}_1} \\ \vdots \\ e_{\mathbf{s}_L} \end{bmatrix} \begin{bmatrix} e_{\mathbf{s}_1}^\top \cdots  e_{\mathbf{s}_L}^\top \end{bmatrix},
\end{equation}
Then by Eq.~\eqref{eqn:tensorMarginals}, $M$ is the matrix of $2\times 2$ blocks $M_{pq}$ given by 
\begin{equation}
M_{pq}=\begin{cases}
\mathrm{\mathrm{diag}}(\mu_{p}^{(1)}), & p=q,\\
\mu_{pq}^{(2)}, & p\neq q.
\end{cases}
\end{equation}
Accordingly we write $M = (M_{pq})  \in \mathbb{R}^{(2L)\times(2L)} $.
Then let $C = (C_{pq}) \in \R^{(2L)\times (2L)}$
be the matrix of the $2\times 2$ blocks $C_{pq}$ defined above, which
specifies the pairwise cost on each pair of marginals\footnote{Without loss of generality, one can assume $C_{pp} = 0$.}. Observe that the value of the objective function of Eq.~\eqref{2mar optimization 2} can in fact be rewritten as 
\[
\sum_{p\neq q} \Tr[C_{pq} \mu^{(2)}_{pq}] = \Tr[C M].
\] 
Then the MMOT problem Eq.~\eqref{2mar optimization 2} can be equivalently rephrased as
\begin{eqnarray}
E_{\text{sce}}[\rho]\  = \ & \underset{M \in \R^{(2L)\times(2L)}, \, \{a_{\mathbf{s}}\}_{\mathbf{s} \in \{0,1\}^L}}{\mathrm{minimize}} \ \ & \Tr(CM) \nonumber \\
&\text{subject to}\ \ & \ M = \sum_{\mathbf{s}} a_{\mathbf{s}} 
\begin{bmatrix} e_{\mathbf{s}_1} \\ \vdots \\ e_{\mathbf{s}_L} \end{bmatrix} \begin{bmatrix} e_{\mathbf{s}_1}^\top \cdots  e_{\mathbf{s}_L}^\top \end{bmatrix},  \label{eqn:Mconstraint} \\
&\ & \ 
M_{pp} = \mathrm{diag}(\mu_p^{(1)}) \,\text{ for all }\, p=1,\ldots,L, \cr
&\ & \ 
\sum_{\mathbf{s}} a_{\mathbf{s}} = 1,\quad a_{\mathbf{s}} \geq 0 \,\text{ for all }\, \mathbf{s} \in \{0,1\}^L. \nonumber
\end{eqnarray}
Note that in our application to SCE, we have fixed 
\[
\mu^{(1)}_p = \begin{bmatrix} 1- \rho_p \\ \rho_p \end{bmatrix}
\]
in advance, i.e., $\mu^{(1)}_p$ is \emph{not} an optimization variable.

At this point, our reformulation of the problem has not alleviated its exponential complexity; indeed, note that 
$\{a_{\mathbf{s}} \}_{\mathbf{s} \in \{0,1\}^L}$ is a vector of size $2^L$. However, the reformulation does suggest a way to reduce the complexity 
by accepting some approximation. In fact, we will 
omit $\{a_{\mathbf{s}} \}_{\mathbf{s} \in \{0,1\}^L}$ entirely from the optimization, retaining only $M$ as 
an optimization variable and enforcing several necessary constraints on $M$ that are
satisfied by the solution of the exact problem.

First, note from the constraint \eqref{eqn:Mconstraint} that $M$ is both
entry-wise nonnegative 
(written $M \geq 0$) and positive semidefinite (written $M\succeq 0$). Second, the fact that the 
1-marginals can be written in terms of the 2-marginals imposes 
additional \emph{local consistency} constraints on $M$. Indeed, with $\mathbf{1}_2 \in \mathbb{R}^2$ denoting the vector of all ones, we 
can write 
\begin{equation}
  \mu^{(2)}_{pq} \mathbf{1}_2 = \mu^{(1)}_{p}, \quad p\neq q,
  \label{}
\end{equation}
from which it follows that 
\begin{equation}
M_{pq}\mathbf{1}_2 = \begin{bmatrix} 1-\rho_p \\ \rho_p \end{bmatrix},\quad p,q=1,\ldots,L.
\end{equation}
Then we obtain the relaxation
\begin{eqnarray}
E_{\text{sce}}[\rho] \ \geq \ E_{\text{sce}}^{\text{sdp}}[\rho] \ :=\  & \underset{M \in \R^{(2L)\times(2L)}}{\mathrm{minimize}} \ \ & \Tr(CM) \label{eqn:sdp2marPre} \\
&\text{subject to}\ \ & \ M \succeq 0, \cr
&\ & \ M_{pq} \geq 0 \,\text{ for all }\, p,q=1,\ldots,L\ (p\neq q), \cr
&\ & \ M_{pq} \mathbf{1}_2 = \mu_p^{(1)} \,\text{ for all }\, p,q=1,\ldots,L\ (p\neq q), \cr
&\ & \ 
M_{pp} = \mathrm{diag}(\mu_p^{(1)}) \,\text{ for all }\, p=1,\ldots,L. \nonumber
\end{eqnarray}
Again, $\mu^{(1)}_p$ is \emph{not} an optimization variable.
It is actually helpful to reformulate the primal 2-marginal SDP \eqref{eqn:sdp2marPre} as
\begin{eqnarray}
\ E_{\text{sce}}^{\text{sdp}}[\rho] \ =\  & \underset{M \in \R^{(2L)\times(2L)}}{\mathrm{minimize}} \ \ & \Tr(CM) \label{eqn:sdp2mar} \\
&\text{subject to}\ \ & \ M \succeq 0, \label{eqn:psdCon} \\
&\ & \ M_{pq} \geq 0 \,\text{ for all }\, p,q=1,\ldots,L\ (p < q), \label{eqn:nnCon} \\
&\ & \ M_{pq} \mathbf{1}_2 = \mu_p^{(1)} \,\text{ for all }\, p,q=1,\ldots,L\ (p < q), \label{eqn:mar1Con} \\
&\ & \ M_{pq}^\top \mathbf{1}_2 = \mu_q^{(1)} \,\text{ for all }\, p,q=1,\ldots,L\ (p < q), \label{eqn:mar2Con} \\
&\ & \ 
M_{pp} = \mathrm{diag}(\mu_p^{(1)}) \,\text{ for all }\, p=1,\ldots,L. \label{eqn:diagCon}
\end{eqnarray}
Note that this formulation is equivalent to  \eqref{eqn:sdp2marPre}, given the 
symmetry of $M$ (implicit in the notation $M\succeq 0$). However, the new formulation removes 
a few redundant constraints and will help us 
derive a more intuitive dual problem. The problem \eqref{eqn:sdp2mar} will be referred to as the primal 2-marginal
SDP, or the \textit{primal problem} for short. Note that the optimal value of the primal problem is 
in fact attained because the constraints \eqref{eqn:psdCon}-\eqref{eqn:diagCon} define a compact feasible set.

Reflecting back on the derivation, we caution that replacing $E_\text{sce}[\rho]$ with 
$E_{\text{sce}}^{\text{sdp}}[\rho]$ comes
at a price. Since we only enforce certain necessary conditions on $M$, the 2-marginals 
that we recover from $M$ may not in fact be the 2-marginals of a joint probability measure on $\{0,1\}^L$. 
Thus $E_{\text{sce}}^{\text{sdp}}[\rho]$ should in general only be expected to be a 
lower-bound to $E_\text{sce}[\rho]$, though we will see that the error is often small in practice.

\subsection{Dual problem}
\label{sec:dual}
As detailed in Section \ref{sec:prelim}, in order to implement the SCF for Kohn-Sham SCE  
it is necessary to compute $\nabla_\rho E_\text{sce}[\rho]$. After replacing the density functional 
$E_\text{sce}[\rho]$ with the efficient approximation $E_\text{sce}^{\text{sdp}}[\rho]$, the 
same derivation motivates us to compute $\nabla_\rho E_\text{sce}^{\text{sdp}}[\rho]$. 
This quantity can obtained by examining the convex duality of our primal 2-marginal SDP.

We let $Y \succeq 0$ be the variable dual to the constraint \eqref{eqn:psdCon}, 
$Z_{pq} \geq 0 $ be dual to \eqref{eqn:nnCon},
$\phi_{pq}$ be dual to \eqref{eqn:mar1Con},
$\psi_{pq}$ be dual to \eqref{eqn:mar2Con}, 
and finally let 
$X_p $ be dual to \eqref{eqn:diagCon}.
Note that $Z_{pq} \in \mathbb{R}^{2\times 2}$ and $\phi_{pq}, \psi_{pq} \in \R^2$
for each $p<q$, and $X_p \in \R^{2\times 2}$ for each $p$.

Then our formal Lagrangian is of the form 
\[
\mathcal{L}\left( M,Y,\{Z_{pq}, \phi_{pq}, \psi_{pq} \}_{p<q}, \{X_p\} \right),
\]
where the domains of $M$ is the set of symmetric $2L\times 2L$ matrices (equivalently, it is 
convenient to think of $M$ as depending only on its upper-block-triangular part), and the 
dual variables are as specified above (i.e., only $Y \succeq 0 $ and $Z_{pq} \geq 0 $ 
are constrained), and more specifically we have (omitting the arguments of $\mathcal{L}$ from 
the notation)
\begin{eqnarray}
\mathcal{L} &= & \Tr(CM) - \Tr(YM) \label{eqn:lagrangian} \\
& & \ \ -\  2 \sum_{p<q} \left[ \Tr(Z_{pq}^\top M_{pq}) + \phi_{pq}^\top \left( M_{pq}\mathbf{1}_2 -  \mu^{(1)}_p  \right) 
+\psi_{pq}^\top  \left( M_{pq}^\top \mathbf{1}_2 -  \mu^{(1)}_q  \right) 
\right] \cr
& & 
\ \ -\  \sum_p \Tr \left(X_{p}^\top \left[M_{pp} - \mathrm{diag}(\mu_p^{(1)}) \right] \right). \nonumber
\end{eqnarray}
It is helpful to realize the identities
\[
\phi_{pq}^\top M_{pq}\mathbf{1}_2 = \Tr \left(M_{pq} [\mathbf{1}_2 \phi_{pq}^\top] \right),
\quad
\psi_{pq}^\top  M_{pq}^\top \mathbf{1}_2 = \Tr \left(M_{pq} [ \psi_{pq} \mathbf{1}_2^\top ]\right).
\]
Then, recognizing that $C=C^\top$ and $Y=Y^\top$ (so that $C_{pq}^\top = C_{qp}$ and $Y_{pq}^\top = Y_{qp}$), 
 minimization over $M$ of the Lagrangian \eqref{eqn:lagrangian} yields the dual problem
\begin{eqnarray}
& \underset{ Y, \, \{Z_{pq}, \phi_{pq}, \psi_{pq} \}_{p< q}, \,\{X_p\}}{\mathrm{maximize}} \ \ & 
\sum_p \Tr\left( X_p^\top \, \mathrm{diag}(\mu_p^{(1)}) \right) + 
2 \sum_{p<q} \left( \phi_{pq}^\top \mu_p^{(1)} + \psi_{pq}^\top \mu_q^{(1)} \right) 
\nonumber \\
&\text{subject to}\ \ & \ Y \succeq 0,  \nonumber \\
&\ & \ Z_{pq} \geq 0 \,\text{ for }\, p<q, \label{eqn:ZpqCon}\\
&\ & \ C_{pq} - Y_{pq} - Z_{pq} -  \phi_{pq}\mathbf{1}_2^\top - \mathbf{1}_2 \psi_{pq}^\top= 0
 \,\text{ for }\, p<q, \label{eqn:KantConPre}\\
&\ & \ C_{pp} - Y_{pp} - X_p^\top = 0. \label{eqn:XCon}
\end{eqnarray}
Observe that the variables $Z_{pq}$ can be removed by combining constraints 
\eqref{eqn:ZpqCon} and \eqref{eqn:KantConPre} to yield
\[
C_{pq} - Y_{pq} -  \phi_{pq}\mathbf{1}_2^\top - \mathbf{1}_2 \psi_{pq}^\top \geq 0.
\]
Moreover, $X_p$ can be removed simply by substituting $X_p = - Y_{pp}$ 
into the objective function (recall that $C_{pp} = 0$). These reductions yield
\begin{eqnarray}
& \underset{ Y, \, \{\phi_{pq}, \psi_{pq} \}_{p< q}}{\mathrm{maximize}} \ \ & 
2 \sum_{p<q} \left( \phi_{pq} \cdot \mu_p^{(1)} + \psi_{pq} \cdot \mu_q^{(1)} \right) 
- \sum_{p,s} Y_{pp}(s,s)\mu_p^{(1)}(s)
\label{eqn:sdp2marDual} \\
&\text{subject to}\ \ & \ Y \succeq 0, \label{eqn:sdp2marDualY}  \\
&\ & \ \phi_{pq}\mathbf{1}_2^\top + \mathbf{1}_2 \psi_{pq}^\top \leq C_{pq} - Y_{pq}
 \,\text{ for }\, p<q. \label{eqn:sdp2marDualPotentials} 
\end{eqnarray}
Here we think of $Y_{pp}(s,s)$ as the $(s,s)$ entry of the $2\times 2$ matrix $Y_{pp}$, and likewise 
$\mu_p^{(1)}(s)$ is the $s$-th entry of $\mu_p^{(1)}$. 

The dual problem may be interpreted as follows. Observe that for $Y$ fixed (e.g., fixed to its optimal value), the maximization problem decouples into a set of independent 
maximization problems for each pair of marginals. We think of $\widetilde{C}_{pq} := C_{pq} - Y_{pq}$ 
as defining an \emph{effective} cost function for each pair of marginals. Then the decoupled problem for a pair
$p<q$ is \emph{exactly} the Kantorovich dual problem in standard (i.e., not multi-marginal) optimal transport, specified 
by cost function $\widetilde{C}_{pq}$ and marginals $\mu_p^{(1)}$, $\mu_q^{(1)}$ \cite{villani2008optimal}. 
In other words, after fixing $Y$, our problem decouples into independent \emph{standard} optimal transport problems 
for each pair of marginals. Nonetheless, these problems are in turn themselves coupled via the optimization 
over $Y \succeq 0 $.

Recall that we wanted to compute $\nabla_\rho E_{\text{sce}}^{\text{sdp}} [\rho]$. Assuming that strong duality 
holds, as shall be established later,  the optimal value of the dual problem \eqref{eqn:sdp2marDual} 
is in fact equal to $E_{\text{sce}}^{\text{sdp}} [\rho]$. (Recall that here we think of the 
1-marginals $\mu_p^{(1)} = [1-\rho_p, \rho_p]^\top$ as being defined in terms of $\rho$.) 
Hence we can compute derivatives by evaluating the 
gradient of the objective function \eqref{eqn:sdp2marDual} with respect to $\rho$ at the \emph{optimizer} 
$\left(Y, \, \{\phi_{pq}, \psi_{pq} \}_{p\neq q}\right)$. (If the optimizer is not unique, then in general we will 
get a subgradient \cite{rock}.)

To carry out this program, first note that $\frac{\partial}{\partial \rho_r} \mu_p^{(1)} = \delta_{pr} [-1,1]^\top$. Therefore 
the partial derivative of the objective function \eqref{eqn:sdp2marDual} with respect to $\rho_r$ yields
\[
\frac{\partial E_{\text{sce}}^{\text{sdp}} [\rho]}{\partial \rho_r} = 
2\sum_{q>r} [\phi_{rq}(1)-\phi_{rq}(0)] + 2\sum_{p<r} [\psi_{pr}(1)-\psi_{pr}(0)] - [Y_{rr}(1,1) - Y_{rr}(0,0)].
\]
If one extends the definition of $\phi_{pq}, \psi_{pq}$ to $p>q$ via the stipulation $\phi_{pq} = \psi_{qp}$, then one has
\[
\frac{\partial E_{\text{sce}}^{\text{sdp}} [\rho]}{\partial \rho_r} = 
\sum_{p\neq r} [\phi_{rp}(1)-\phi_{rp}(0)] - [Y_{rr}(1,1) - Y_{rr}(0,0)].
\]

\subsection{Strong duality and dual attainment}
\label{sec:strong}

In this subsection, we show that the optimizer of the dual problem ~\eqref{eqn:sdp2marDual} can be attained. Before embarking on a proof of dual attainment, we note that Sion's minimax theorem \cite{Komiya1988} guarantees that the duality gap is zero as the domain of the primal problem is compact, as stated in the following Lemma.
\begin{lemma}
\label{lem:strongDuality}
The primal and dual problems~\eqref{eqn:sdp2mar} and \eqref{eqn:sdp2marDual}, respectively, have the same (finite) optimal value.
\end{lemma}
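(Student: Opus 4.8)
The plan is to realize $E_{\text{sce}}^{\text{sdp}}[\rho]$ as a saddle value, to exchange minimization and maximization by Sion's minimax theorem, and then to observe that the resulting inner problem is an ordinary finite-dimensional linear program whose dual reassembles into \eqref{eqn:sdp2marDual}. First I would let $\mathcal{K}_0 \subset \R^{(2L)\times(2L)}$ be the set of symmetric matrices $M$ satisfying the polyhedral constraints \eqref{eqn:nnCon}, \eqref{eqn:mar1Con}, \eqref{eqn:mar2Con}, \eqref{eqn:diagCon} but \emph{not} the semidefinite constraint \eqref{eqn:psdCon}. The structural point is that $\mathcal{K}_0$ is already compact: each off-diagonal block $M_{pq}$ with $p<q$ is forced to be a nonnegative matrix with row sums $\mu_p^{(1)}$ and column sums $\mu_q^{(1)}$, hence a coupling of two fixed probability vectors (so its entries lie in $[0,1]$), while the diagonal blocks are fixed outright. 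It is also convex and nonempty, since the matrix $M$ built from the product measure $\bigotimes_{p}\mu_p^{(1)}$ as in \eqref{eqn:Mconstraint} lies in $\mathcal{K}_0$, indeed in the full feasible set of \eqref{eqn:sdp2mar}.

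Because $\sup_{Y\succeq 0}\bigl(-\Tr(YM)\bigr)$ equals $0$ when $M\succeq 0$ and $+\infty$ otherwise, the supremum over $Y\succeq 0$ reinstates exactly \eqref{eqn:psdCon}, and since $\mathcal{K}_0$ contains a point with $M\succeq 0$ this gives
\[
E_{\text{sce}}^{\text{sdp}}[\rho] \;=\; \min_{M\in\mathcal{K}_0}\ \sup_{Y\succeq 0}\ \Tr\bigl((C-Y)M\bigr).
\]
The integrand $\Tr\bigl((C-Y)M\bigr)$ is bilinear, hence real-valued and affine --- in particular simultaneously quasiconvex, quasiconcave, and continuous --- separately in $M$ and in $Y$; moreover $\mathcal{K}_0$ is convex and compact and the cone $\{Y\succeq 0\}$ is convex. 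Sion's minimax theorem \cite{Komiya1988} therefore applies and yields
\[
E_{\text{sce}}^{\text{sdp}}[\rho] \;=\; \sup_{Y\succeq 0}\ \min_{M\in\mathcal{K}_0}\ \Tr\bigl((C-Y)M\bigr).
\]

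For each fixed $Y\succeq 0$ the inner minimization is a feasible, bounded linear program over the polytope $\mathcal{K}_0$. Using $C_{pp}=0$ and the symmetry of $C$, $Y$, $M$, one verifies $\Tr\bigl((C-Y)M\bigr) = -\sum_{p,s} Y_{pp}(s,s)\,\mu_p^{(1)}(s) + 2\sum_{p<q}\langle C_{pq}-Y_{pq},\,M_{pq}\rangle$, and the constraints defining $\mathcal{K}_0$ decouple across the blocks $\{M_{pq}\}_{p<q}$; so this LP is a constant (from the fixed diagonal blocks) plus a sum over $p<q$ of independent Kantorovich \emph{primal} optimal transport problems with cost $C_{pq}-Y_{pq}$ and marginals $\mu_p^{(1)},\mu_q^{(1)}$. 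By finite-dimensional LP duality (no constraint qualification is needed here) each of these equals its Kantorovich dual $\max\{\phi_{pq}^\top\mu_p^{(1)}+\psi_{pq}^\top\mu_q^{(1)} \ :\ \phi_{pq}\mathbf{1}_2^\top+\mathbf{1}_2\psi_{pq}^\top\le C_{pq}-Y_{pq}\}$. Substituting these values and coalescing the suprema over $Y\succeq 0$ and over $\{\phi_{pq},\psi_{pq}\}$ into one joint supremum reproduces exactly problem \eqref{eqn:sdp2marDual}, so its optimal value equals $E_{\text{sce}}^{\text{sdp}}[\rho]$; finiteness is clear since $E_{\text{sce}}^{\text{sdp}}[\rho]=\min_{M\in\mathcal{K}_0\cap\{M\succeq 0\}}\Tr(CM)$ is the minimum of a continuous function over a nonempty compact set. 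The one point that needs care --- and the only real obstacle --- is the choice of what to keep in $\mathcal{K}_0$: enough constraints must remain to force compactness so that Sion applies, yet the dualized constraint --- here only \eqref{eqn:psdCon} --- must be exactly the one whose reintroduction through LP duality rebuilds \eqref{eqn:sdp2marDual}.
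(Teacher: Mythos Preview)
Your proof is correct and rests on the same idea the paper invokes in one sentence, namely Sion's minimax theorem together with compactness of the primal feasible set. Your execution is more explicit than the paper's: by retaining the polyhedral constraints \eqref{eqn:nnCon}--\eqref{eqn:diagCon} in the compact set $\mathcal{K}_0$ and dualizing only the semidefinite constraint, the post-Sion inner problem becomes a finite LP that factorizes into standard two-marginal Kantorovich problems, whose LP duals reassemble precisely into \eqref{eqn:sdp2marDual}; this simultaneously proves zero duality gap and re-derives the dual form that the paper obtains separately via the Lagrangian in Section~\ref{sec:dual}.
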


However, in order to compute the SCE potential, we actually require not only that the duality gap is zero, but also that the supremum in the dual problem is \emph{attained}. One might hope to verify Slater's condition~\cite{BoydVandenberghe2004}, which provides a standard method for verifying both strong duality and such `dual attainment' simultaneously. The trouble is that Slater's condition requires the existence of a feasible \emph{interior} point $M$, i.e., a point $M$ satisfying $M\succ 0$ and $M_{pq}>0$ for all $p\neq q$. This scenario is in fact impossible
since for example the vector
\begin{equation}
\begin{bmatrix}\mathbf{1}_2^\top & -\mathbf{1}_2^\top& 0 & \cdots & 0
\end{bmatrix}^\top \in \mathbb{R}^{2L}
\end{equation}
lies in the null space of any feasible $M$, hence $M\succ 0$ \emph{never} holds for feasible $M$.

Instead of using Slater's condition, we will prove dual attainment via a
very careful study of the structure of the dual problem.

\begin{theorem}
\label{thm:strongDuality}
The optimal value of the dual 2-marginal SDP \eqref{eqn:sdp2marDual} is attained. By Lemma \ref{lem:strongDuality}, this optimal value is equal to the optimal value of the primal 2-marginal SDP \eqref{eqn:sdp2mar}.
\end{theorem}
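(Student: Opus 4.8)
The plan is to establish dual attainment directly, by extracting a convergent (sub)sequence from a maximizing sequence for the dual problem \eqref{eqn:sdp2marDual} and arguing that the relevant dual variables can be taken to lie in a compact set. The objective depends on the dual variables only through the combinations $\phi_{pq}(1)-\phi_{pq}(0)$, $\psi_{pq}(1)-\psi_{pq}(0)$, and the diagonal entries of $Y$, while the constraint \eqref{eqn:sdp2marDualPotentials} couples everything. The first observation I would make is a gauge freedom: adding a constant to $\phi_{pq}$ and subtracting the same constant (times the appropriate all-ones pattern) from $\psi_{pq}$ leaves both the objective and the constraint $\phi_{pq}\mathbf{1}_2^\top + \mathbf{1}_2 \psi_{pq}^\top \leq C_{pq} - Y_{pq}$ invariant. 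So without loss of generality I may normalize, say, $\phi_{pq}(0) = 0$ for every $p<q$, which already reduces the effective variables to $(Y, \{\phi_{pq}(1), \psi_{pq}(0), \psi_{pq}(1)\})$.

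Next I would control $Y$. The objective contains the term $-\sum_{p,s} Y_{pp}(s,s)\mu_p^{(1)}(s)$; if any diagonal entry of $Y$ blew up to $+\infty$ along the maximizing sequence this term would drive the objective to $-\infty$ (since $\mu_p^{(1)}(s) \geq 0$ and, for densities in the interior, strictly positive), contradicting maximality — so the positive diagonal entries of $Y$ are bounded, and since $Y \succeq 0$ this bounds all of $Y$ (off-diagonal entries of a PSD matrix are controlled by diagonal ones via $|Y_{ij}| \le \sqrt{Y_{ii}Y_{jj}}$). One subtlety: if some $\rho_p \in \{0,1\}$ then $\mu_p^{(1)}$ has a zero component and this argument needs care; I would either restrict to the relevant case or note the objective only sees $Y_{pp}(s,s)$ weighted by $\mu_p^{(1)}(s)$, and handle the degenerate components by the same gauge/reduction used for the potentials. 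With $Y$ confined to a compact set, pass to a subsequence so that $Y \to Y^\star \succeq 0$. Then, with $Y$ (nearly) fixed, the constraint \eqref{eqn:sdp2marDualPotentials} reads $\phi_{pq}\mathbf{1}_2^\top + \mathbf{1}_2 \psi_{pq}^\top \le \widetilde{C}_{pq}$ with $\widetilde{C}_{pq} = C_{pq} - Y_{pq}$ bounded; this is exactly the feasibility condition for the Kantorovich dual of a two-marginal OT problem with bounded costs and fixed marginals. A standard compactness fact for Kantorovich potentials — after the normalization $\phi_{pq}(0)=0$, feasibility forces $\phi_{pq}(1)$ and $\psi_{pq}(0), \psi_{pq}(1)$ into a bounded interval determined by the four entries of $\widetilde{C}_{pq}$ — gives boundedness of all remaining variables. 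Passing to a further subsequence yields a limit $(Y^\star, \{\phi_{pq}^\star,\psi_{pq}^\star\})$ that is feasible (the constraints \eqref{eqn:sdp2marDualY}–\eqref{eqn:sdp2marDualPotentials} are closed) and, by continuity of the objective, optimal. Combined with Lemma \ref{lem:strongDuality}, this proves the theorem.

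The main obstacle I anticipate is the interplay between the gauge reduction and the $Y$-bound in the possibly degenerate case where some $\mu_p^{(1)}$ has a vanishing component (i.e.\ $\rho_p \in \{0,1\}$): there the coercivity argument pinning down $Y_{pp}$ weakens along one coordinate direction, and one must check that the corresponding unbounded direction in $Y$ can be absorbed by the gauge freedom in $(\phi,\psi)$ — equivalently, that it does not actually change the constraint set or the objective and hence can be frozen at a convenient value. A clean way to sidestep this is to observe that only $\widetilde C_{pq} = C_{pq} - Y_{pq}$ and the weighted diagonal $Y_{pp}(s,s)\mu_p^{(1)}(s)$ ever enter the problem, so one may reparametrize in these quantities from the start, restoring coercivity. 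The rest — closedness of the feasible set, continuity of the linear objective, and the elementary bound $|Y_{ij}|\le\sqrt{Y_{ii}Y_{jj}}$ for $Y\succeq0$ — is routine.
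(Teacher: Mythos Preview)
Your coercivity argument for $Y$ has a genuine gap, and the issue is present already in the interior case $0<\rho_p<1$, not only in the degenerate one you flag at the end. You claim that if some diagonal entry $Y_{pp}(s,s)$ blows up then the term $-\sum_{p,s}Y_{pp}(s,s)\mu_p^{(1)}(s)$ drives the objective to $-\infty$. But the potential terms $2\sum_{p<q}(\phi_{pq}\cdot\mu_p^{(1)}+\psi_{pq}\cdot\mu_q^{(1)})$ are coupled to $Y$ through the constraint \eqref{eqn:sdp2marDualPotentials}, and they can grow to \emph{exactly} compensate. Concretely, for $L=2$ take $Y=c\,P_1P_1^\top$ with $P_1=[\mathbf{1}_2^\top,-\mathbf{1}_2^\top]^\top$. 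Then $Y\succeq 0$ with all diagonal entries equal to $c$, and $Y_{12}=-c\,\mathbf{1}_2\mathbf{1}_2^\top$, so the effective cost becomes $C_{12}+c\,\mathbf{1}_2\mathbf{1}_2^\top$. Since adding a constant $c$ to every entry of a two-marginal cost increases the Kantorovich value by exactly $c$, the optimal potential contribution gains $+2c$ while the diagonal term loses $-2c$, and the objective is unchanged for all $c>0$. Thus the superlevel set $\{Y\succeq 0:f(Y)\ge d_0\}$ is \emph{not} compact, and your gauge normalization $\phi_{pq}(0)=0$ does nothing to prevent this: with that normalization the optimal $\psi_{pq}$ still grows like $c$.

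What is missing is a gauge freedom in $Y$ itself, not just in the Kantorovich potentials. The paper identifies an invariance $f(Y)=f(Y+PB+B^\top P^\top)$ for any $B\in\R^{(L-1)\times 2L}$, where the columns of $P$ span exactly the null space common to all primal-feasible $M$. Modding out by this action lets one restrict $Y$ to the form $Q\widetilde{Y}Q^\top$ (with $Q$ complementary to $P$), and on \emph{that} subspace the bound $f(Y)\le \Tr(CM)-\Tr(YM)$, for the specific feasible $M=Q\widetilde{M}Q^\top$ with $\widetilde{M}\succ 0$, finally yields compactness. Your closing suggestion to ``reparametrize in $\widetilde{C}_{pq}$ and the weighted diagonal'' does not do this job, because the constraint $Y\succeq 0$ is a global condition on the full matrix and cannot be expressed in those block-local quantities alone; the $P$-directions are precisely what slip through.
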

\begin{proof}
Without loss of generality we assume 
  \begin{equation}
    0<\rho_p<1, \quad p=1,\ldots,L.
    \label{eqn:rho_assumption_0}
  \end{equation}
To see why this assumption can be made, observe that if $\rho_p \in \{0,1\}$ for some $p$, then attainment for the dual problem \eqref{eqn:sdp2marDual} can be reduced to attainment for a strictly smaller dual 2-marginal SDP. We leave further details of such a reduction to the reader.) Also, 
for later reference, we let $F(Y, \{\phi_{pq},\psi_{pq}\}_{p<q})$ denote the objective function \eqref{eqn:sdp2marDual}, 
and we let $\mathcal{D}$ denote the feasible domain defined by the constraints \eqref{eqn:sdp2marDualY},  \eqref{eqn:sdp2marDualPotentials}.

Now to get started, observe that if we fix $Y \succeq 0$ and view \eqref{eqn:sdp2marDual} 
as an optimization problem over $\{\phi_{pq},\psi_{pq}\}_{p<q}$ only, the resulting problem 
is in fact a linear program. Let us call this the $Y$-program, more specifically: 
\begin{eqnarray*}
& \underset{ \{\phi_{pq}, \psi_{pq} \}_{p< q}}{\mathrm{maximize}} \ \ & 
2 \sum_{p<q} \left( \phi_{pq} \cdot \mu_p^{(1)} + \psi_{pq} \cdot \mu_q^{(1)} \right) 
- \sum_{p,s} Y_{pp}(s,s)\mu_p^{(1)}(s)
\label{eqn:Yprog} \\
&\text{subject to}\ \ & \ \phi_{pq}\mathbf{1}_2^\top + \mathbf{1}_2 \psi_{pq}^\top \leq C_{pq} - Y_{pq}
 \,\text{ for }\, p<q. \label{eqn:YprogCon} 
\end{eqnarray*}
In fact we may consider the $Y$-program for \emph{any} matrix $Y$, and this will slightly simplify some discussion later. 
Observe that each $Y$-program is feasible, and the optimal values $f(Y)$ of all $Y$-programs
are finite. 
Since they are linear programs, this means that the optimal values of
the $Y$-programs can be attained. 
Thus for each $Y$, 
there exist $\phi_{pq}^\star (Y)$, $\psi_{pq}^\star (Y)$ for $p <q$ which optimize the $Y$-program, i.e., attain 
the value $f(Y)$. 
By construction $f(Y)$ is concave, hence continuous, in $Y$.

Now let $d_0 = f(0)$, so  $d^\star \geq d_0$, where $d^\star$ is the optimal value of the dual problem \eqref{eqn:sdp2marDual}. 
Hence the feasible set of  \eqref{eqn:sdp2marDual} could be refined to $S\cap \mathcal{D}$, where 
\[
S:= \{  Y \succeq 0 \,: \, f(Y) \geq d_0 \},
\]
without altering the optimal value. Now if $S$ were compact, then the lemma would follow. To see this, 
note that since $d^\star < \infty$ (which follows from weak duality), we could take an optimizing sequence $(Y^{(k)}, \{\phi_{pq}^{(k)},\psi_{pq}^{(k)}\}_{p<q})$ for 
\eqref{eqn:sdp2marDual}, where $Y^{(k)} \in S \cap \mathcal{D}$. Then by compactness we could find a subsequence of $Y^{(k)}$ converging to some 
$Y^\star$. By the continuity of $f$, then $f(Y^\star) = d^\star$. Then it would follow that the optimum is attained at 
the point 
$(Y^\star, \{\phi_{pq}^\star (Y^\star),\psi_{pq}^\star (Y^\star)\}_{p<q})$.

Unfortunately, $S$ is not compact, but we will find a further constraint that does yield a compact feasible set without 
altering the optimal value. Then the preceding argument will complete the proof.

To further constrain the feasible set, we will observe a transformation of $Y$ that preserves the value of $f(Y)$, 
then `mod out' by this transformation. 
To this end, first note that via the discussion of Kantorovich duality following \eqref{eqn:sdp2marDual}
we can in fact write 
\[
f(Y)=-\sum_{p=1}^{L}\Tr\left[Y_{pp}\mathrm{diag}(\mu_{p}^{(1)})\right]+\sum_{p,q=1}^{L}\mathbf{OT}_{pq}(C_{pq}-Y_{pq}),
\]
 where $\mathbf{OT}_{pq}(A)$ is the optimal cost of the \emph{standard}
 optimal transport problem with cost matrix $A$ and marginals $\mu_{p}^{(1)},\mu_{q}^{(1)}$.

Then let $P\in \mathbb{R}^{(2L)\times (L-1)}$ be defined by 
\begin{equation}
P := \begin{bmatrix} \mathbf{1}_2 & & & \\ -\mathbf{1}_2 &  \mathbf{1}_2 & &  \\  & -\mathbf{1}_2& \ddots &  \\  &  & \ddots & \mathbf{1}_2  \\  & & &  -\mathbf{1}_2 \end{bmatrix},
\end{equation} 
and let its columns be denoted $P_i$ for $i=1,\ldots, L-1$. Then we claim that 
\begin{equation}
f(Y) = f\left( Y + P_i v^\top + v P_i^\top \right) 
\label{eqn:fClaim}
\end{equation}
for any $Y$, $v\in \R^{2L}$, and any $i=1,\ldots,L-1$. To prove this, write 
\[
v = \left[v_1^\top \cdots v_L^\top \right]^\top,
\]
where $v_q \in \R^2$ for $q=1,\ldots,L$.
Then observe that, via the discussion of Kantorovich duality following the statement \eqref{eqn:sdp2marDual} of
the dual problem,
we can in fact write 
\[
f(Y)=-\sum_{p=1}^{L}\Tr\left[Y_{pp}\mathrm{diag}(\mu_{p}^{(1)})\right]+2\sum_{p<q}\mathbf{OT}_{pq}(C_{pq}-Y_{pq}),
\]
 where $\mathbf{OT}_{pq}(A)$ is the optimal cost of the \emph{standard
}optimal transport problem with cost matrix $A$ and marginals $\mu_{p}^{(1)},\mu_{q}^{(1)}$.

Then compute 
\begin{eqnarray*}
f(Y+P_{i}v^{\top}) & = & -\sum_{p=1}^{L}\Tr\left[Y_{pp}\mathrm{diag}(\mu_{p}^{(1)})\right]-\Tr\left[\mathbf{1}_{2}v_{i}^{\top}\mathrm{diag}(\mu_{i}^{(1)})\right] + \Tr\left[\mathbf{1}_{2}v_{i+1}^{\top}\mathrm{diag}(\mu_{i+1}^{(1)})\right] \\
 &  & \ \ +\ 2\sum_{p<q,\,p\notin\{i,i+1\}}\mathbf{OT}_{pq}(C_{pq}-Y_{pq})\\
 &  & \ \ +\ 2\sum_{q=i+1}^{L}\mathbf{OT}_{iq}(C_{iq}-Y_{iq}-\mathbf{1}_{2}v_{q}^{\top})+2\sum_{q=i+2}^{L}\mathbf{OT}_{i+1,q}(C_{i+1,q}-Y_{i+1,q}+\mathbf{1}_{2}v_{q}^{\top}).
\end{eqnarray*}
 Now 
\[
\Tr\left[\mathbf{1}_{2}v_{i}^{\top}\mathrm{diag}(\mu_{i}^{(1)})\right]=v_{i}\cdot\mu_{i}^{(1)},\quad\Tr\left[\mathbf{1}_{2}v_{i+1}^{\top}\mathrm{diag}(\mu_{i+1}^{(1)})\right]=v_{i+1}\cdot\mu_{i+1}^{(1)},
\]
 and moreover it is not hard to see that 
\[
\mathbf{OT}_{pq}(A+\mathbf{1}_{2}x^{\top})=
\mathbf{OT}_{pq}(A) + 
x\cdot\mu_{q}^{(1)}
\]
 for any $A\in\R^{2\times2},x\in\R^{2}$, hence 
\begin{eqnarray*}
f(Y+P_{i}v^{\top}) & = & -\sum_{p=1}^{L}\Tr\left[Y_{pp}\mathrm{diag}(\mu_{p}^{(1)})\right]-v_{i}\cdot\mu_{i}^{(1)}+v_{i+1}\cdot\mu_{i+1}^{(1)}+2 \sum_{p<q}\mathbf{OT}_{pq}(C_{pq}-Y_{pq})\\
 &  & \ \ -\ 2\sum_{q=i+1}^{L}v_{q}\cdot\mu_{q}^{(1)}+2\sum_{q=i+2}^{L}v_{q}\cdot\mu_{q}^{(1)}\\
 & = & f(Y)-v_{i}\cdot\mu_{i}^{(1)}-v_{i+1}\cdot\mu_{i+1}^{(1)}.
\end{eqnarray*}
 Similarly
\begin{eqnarray*}
f(Y+vP_{i}^{\top}) & = & -\sum_{p=1}^{L}\Tr\left[Y_{pp}\mathrm{diag}(\mu_{p}^{(1)})\right]-\Tr\left[v_{i}\mathbf{1}_{2}^{\top}\mathrm{diag}(\mu_{i}^{(1)})\right]+\Tr\left[v_{i+1}\mathbf{1}_{2}^{\top}\mathrm{diag}(\mu_{i+1}^{(1)})\right]\\
 &  & \ \ +\ 2\sum_{p<q,\,q\notin\{i,i+1\}}\mathbf{OT}_{pq}(C_{pq}-Y_{pq})\\
 &  & \ \ +\ 2\sum_{p=1}^{i-1}\mathbf{OT}_{pi}(C_{pi}-Y_{pi}-v_{p}\mathbf{1}_{2}^{\top})+2\sum_{p=1}^{i}\mathbf{OT}_{p,i+1}(C_{p,i+1}-Y_{p,i+1}+v_{p}\mathbf{1}_{2}^{\top})\\
 & = & f(Y)+v_{i}\cdot\mu_{i}^{(1)}+v_{i+1}\cdot\mu_{i+1}^{(1)}.
\end{eqnarray*}
 Since the identities 
\[
f(Y+P_{i}v^{\top})=f(Y)-v_{i}\cdot\mu_{i}^{(1)}-v_{i+1}\cdot\mu_{i+1}^{(1)},\quad f(Y+vP_{i}^{\top})=f(Y)+v_{i}\cdot\mu_{i}^{(1)}+v_{i+1}\cdot\mu_{i+1}^{(1)}
\]
 hold for arbitrary $Y$, the claim Eq.~\eqref{eqn:fClaim} is proven.
 
 Then from Eq.~\eqref{eqn:fClaim} it follows that 
 \begin{equation}
 f(Y) = f(Y + P B + B^\top P^\top)
 \label{eqn:fClaimImp}
 \end{equation}
 for arbitrary $B \in \R^{(L-1)\times(2L)}$.
 
Now let $Q \in \R^{ (2L)\times(L+1)}$ be defined by 
\begin{equation*}
Q = \begin{bmatrix} w_1 & 0 & \cdots & 0 & w_2\\ 0 & w_1 &  & \vdots & \vdots \\ \vdots &  & \ddots  & & \\ 0 & \cdots &   & w_1 & w_2 \end{bmatrix},\quad w_1=\frac{1}{2}\begin{bmatrix} 1 \\ -1 \end{bmatrix},\quad w_2 = \frac{1}{2}\mathbf{1}_2,
\end{equation*}
and observe that $Q$ is chosen so that each column of $Q$ is orthogonal to each column of $P$. Moreover $P$ and $Q$ both 
have full rank, so it follows that 
$R := [Q, P]$ is invertible. 

Then for fixed $Y$, consider 
\[
\hat{Y}=R^{\top}YR=\left(\begin{array}{cc}
Q^{\top}YQ & Q^{\top}YP\\
P^{\top}YQ & P^{\top}YP
\end{array}\right).
\]
 We aim to choose $B$ such that 
\[
R^{\top}(PB+B^{\top}P^{\top})R=\left(\begin{array}{cc}
0 & 0\\
P^{\top}PBQ & P^{\top}PBP
\end{array}\right)+\left(\begin{array}{cc}
0 & Q^{\top}B^{\top}P^{\top}P\\
0 & P^{\top}B^{\top}P^{\top}P
\end{array}\right)
\]
 cancels $\hat{Y}$ on all but the top-left block. Using $Q^{\top}P=0$
(and $P^{\top}Q=0$), one can readily check that such a choice is
given by 
\[
-B=(P^{\top}P)^{-1}\hat{Y}_{21}(Q^{\top}Q)^{-1}Q^{\top}+\frac{1}{2}(P^{\top}P)^{-1}\hat{Y}_{22}(P^{\top}P)^{-1}P^{\top}.
\]
 By the identity \eqref{eqn:fClaimImp}, it follows that we can further restrict
the feasible set by intersecting with 
\begin{equation}
S'=\left\{ Y\,:\,R^{\top}YR=\left(\begin{array}{cc}
* & 0\\
0 & 0
\end{array}\right)\succeq0,\ f(Y) \geq d_0 \right\} .
\label{eqn:Sprime}
\end{equation}
In fact $S'$ is compact, and the proof is complete pending the proof of this claim, to which we now turn.

Observe that for 
$(Y, \{\phi_{pq},\psi_{pq}\}_{p<q})$ feasible, we may multiply 
Eq. \eqref{eqn:sdp2marDualPotentials} from the left by $\big(\mu_p^{(1)}\big)^\top$ and 
from the right by $\mu_q^{(1)}$ to obtain
\begin{eqnarray*}
\phi_{pq} \cdot \mu_p^{(1)} + \psi_{pq} \cdot \mu_q^{(1)} &\leq  & \big(\mu_p^{(1)}\big)^\top [C_{pq} - Y_{pq}] \big(\mu_q^{(1)}\big) \\
& = & \big(\mu_q^{(1)}\big)^\top [C_{pq} - Y_{pq}]^\top \big(\mu_p^{(1)}\big)\\
& = &  \Tr \left( [C_{pq} - Y_{pq} ]^\top\big(\mu_p^{(1)}\big) \big(\mu_q^{(1)}\big)^\top \right).
\end{eqnarray*}
By substituting this inequality into the objective function $F(Y, \{\phi_{pq},\psi_{pq}\}_{p<q})$ as defined in 
\eqref{eqn:sdp2marDual}, we see that 
\[
F(Y, \{\phi_{pq},\psi_{pq}\}_{p<q}) \leq \Tr (CM) - \Tr(YM).
\]
for $(Y, \{\phi_{pq},\psi_{pq}\}_{p<q})$ feasible, where
\begin{equation*}
M_{pq} :=\begin{cases}
\mathrm{\mathrm{diag}}\big(\mu_{p}^{(1)}\big), & p=q\\
\big(\mu_p^{(1)}\big) \big(\mu_q^{(1)}\big)^\top , & p\neq q.
\end{cases}
\end{equation*}
It follows then that 
\[
f(Y) \leq \Tr(CM) - \Tr(YM).
\]
In fact $M$ can be written $M = Q \widetilde{M} Q^\top$, where $\widetilde{M} \succ 0$. This can be verified directly by taking 
 \begin{equation*}
    \widetilde{M} =\begin{bmatrix} \widetilde \rho_1  \\ \vdots \\ \widetilde \rho_L \\
      1\end{bmatrix} \begin{bmatrix} \widetilde \rho_1  & \cdots & \widetilde \rho_L
        & 1\end{bmatrix} + \text{diag}\left(\begin{bmatrix}1-\widetilde \rho_1^2 &
          \cdots & 1-\widetilde{\rho}_L^2 & 0 \end{bmatrix}\right),
   \end{equation*} 
with 
   \begin{equation*}
     \widetilde \rho_p = 1-2\rho_p,\quad p=1,\ldots,L.
   \end{equation*}
Note that $\widetilde{M} \succ 0$ by the assumption \eqref{eqn:rho_assumption_0}.
Hence 
\[
f(Y) \leq \Tr(CM) - \Tr(Q^\top Y Q \widetilde{M}).
\]
Now since $\widetilde{M} \succ 0$, there exists a scalar $K > 0$ such that if $Y \succeq 0$ and $Q^\top Y Q \not\preceq K$, 
then $f(Y) < d_0$. But $Q^\top Y Q$ is the upper-left block of $R^\top Y R$, so it follows from the definition \eqref{eqn:Sprime} of $S'$ that 
that
\[
S' \subset \left\{ Y\, :\, R^\top Y R = \left(\begin{array}{cc}
A & 0\\
0 & 0
\end{array}\right),\, 0 \preceq A \preceq K \right\},
\]
from which it follows that $S'$ is compact, and the proof is complete.
\end{proof}

\begin{remark}
Note that the proof of Theorem \ref{thm:strongDuality} guarantees that the domain of the dual problem \eqref{eqn:sdp2marDual} 
can be restricted to $Y$ of the form $Y = Q \widetilde{Y} Q^\top$, yielding a `reduced' dual problem in which $\widetilde{Y}$ replaces 
$Y$ as an optimization variable. In fact, one can also verify directly that any $M$ feasible for 
the primal problem \eqref{eqn:sdp2mar} satisfies $MP = 0$, hence the domain of the primal problem can be restricted to $M$ 
of the form $M = Q \widetilde{M} Q^\top$, likewise yielding a reduced primal problem.

But despite this apparent symmetry, the latter observation need not imply the former in a 
more general SDP setting, and the arguments given in the proof of Theorem \ref{thm:strongDuality}, which use more of the 
specific structure of our problem, do appear to be necessary to the proof of dual attainment for this problem.

Moreover, observe with caution that the dual of such a reduced primal problem is \emph{not} the reduced dual problem!
\end{remark}

\section{Tighter lower bound via 3-marginals}
\label{sec:three marginal convex}
In this section, we further tighten the convex relaxation proposed in Section \ref{sec:two marginal convex} with a formulation that additionally involves the 3-marginals. 

One defines the 3-marginals $\mu^{(3)}_{pqr}$ (for $p,q,r,$ distinct) induced by a probability measure $\mu$ on $\{0,1\}^L$ via 
\begin{equation}
\mu^{(3)}_{pqr}(s_{p},s_{q},s_r) :=
\sum_{s_1,\ldots,s_L\backslash\{s_{p},s_{q},s_r\}} \mu(s_1,\ldots,s_L).
  \label{eqn:threemarginal}
\end{equation}
There is no 3-marginal analog known to us of the semidefinite constraint that can be enforced using the 2-marginals. However, 
we can nonetheless use the 3-marginals to enforce additional necessary \emph{local consistency} constraints. Indeed, the 2-marginals 
can themselves be written in terms of the 3-marginals via 
\begin{equation}
\mu^{(2)}_{pq}(s_{p},s_{q}) = \sum_{s_r} \mu^{(3)}_{pqr}(s_{p},s_{q},s_r).
\end{equation}

Accordingly, we will include $K = \{K_{pqr}\}$ for distinct $p,q,r$ as optimization variables for the 3-marginals. 
Note that based on Eq.~\eqref{eqn:threemarginal} we can enforce that $K$ is \emph{symmetric}, 
by which we mean that 
\[
K_{pqr} (s_p, s_q, s_r) = K_{\sigma(p) \sigma(q) \sigma(r)}(s_{\sigma(p)},s_{\sigma(q)},s_{\sigma(r)})
\]
for any permutation $\sigma$ on the letters $\{p,q,r\}$. If we were to extend $K_{pqr}$ by zeros to $p,q,r$ not distinct, 
then we could think of $K\in \R^{(2L)\times(2L)\times(2L)}$ as a symmetric 3-tensor, with $(p,q,r)$-th $2\times2 \times 2$ 
block given by $K_{pqr}$.
 In principle the imposition of symmetry removes some redundancy in the specification of $K$.

Then we arrive at the following \emph{3-marginal SDP}: 
\begin{eqnarray}
&\underset{M \in \R^{(2L)\times(2L)},\, K\in \R^{(2L)\times(2L)\times(2L)}  }{\mathrm{minimize}} \ \ & \Tr(CM) \label{eqn:sdp3mar} \\
&\text{subject to}\ \ & \ M \succeq 0, \cr
&\ & \ M_{pq} \geq 0 \,\text{ for } p\neq q , \cr
&\ & \ M_{pq} \mathbf{1}_2 = \mu_p^{(1)} \,\text{ for } p\neq q , \cr
&\ & \ 
M_{pp} = \mathrm{diag}(\mu_p^{(1)}) \,\text{ for all }\, p, \nonumber \\
&\ & \ K \geq 0, \ K \text{ symmetric}, \cr 
&\ & \  M_{pq}(s_p,s_q) = \sum_{s_r} K_{pqr} (s_p,s_q, s_r) \,\text{ for } p, q, r \,\text{distinct}. \nonumber
\end{eqnarray}
Note that the blocks $K_{pqr}$ for $p,q,r$ not distinct are superfluous and can be discarded in an efficient optimization.

For simplicity, we omit discussion of the duality of 
\eqref{eqn:sdp3mar}. Since only linear constraints have been added, most of the interesting features 
from the mathematical viewpoint have already been discussed above. Indeed, as in Section~\ref{sec:dual}, we may derive the dual of the 3-marginal problem~\eqref{eqn:sdp3mar}, and we may certify as in Section~\ref{sec:strong} that the 3-marginal problem satisfies strong duality and dual attainment.

\section{General MMOT with pairwise cost}
\label{sec:general OT}
As has been suggested both explicitly and via the notation, almost all of our discussion of relaxation methods for MMOT can be applied to general MMOT problems with pairwise cost functions. The main caveat is that specific references to the fact that the 1-marginal state space 
has two elements should be suitably generalized. For clarity, we now recapitulate our methods for the general MMOT problem with pairwise cost. The reader interested in general MMOT should still see the earlier sections for derivations, discussions, and proofs. Here we only summarize the methods.

We will consider a problem with $L$ marginals, written $\mu_p^{(1)}$ for $p=1,\ldots,L$. These quantities are fixed in advance 
and never varied in the following discussion.
We let $N_p$ be the size of the state 
space of the $p$-th marginal, so $\mu_p^{(1)}$ is a probability vector of length $N_p$. Note that the marginals need not all have the same state space, i.e., $N_p$ can depend on $p$. We write the $p$-th state space as $\mathcal{X}_p := \{1,\ldots,N_p\}$. Then the joint 
state space is given by $\mathcal{X} := \prod_{p=1}^L \mathcal{X}_p$, and we write $\mathrm{Pr}_p$ for the $p$-th projection $\mathcal{X} \ra \mathcal{X}_p$.
Suppose that we are given a pairwise cost function $C_{pq} \in \R^{N_p \times N_q}$ for each pair $p\neq q$ of marginals. 
(Without loss of generality we assume $C_{pp} = 0$.) Then we consider the problem 
\begin{equation}
\min_{\mu \in \mathcal{P}(\mathcal{X})} \sum_{(s_1,\ldots,s_L) \in \mathcal{X}} \sum_{p,q=1}^L C_{pq}(s_p,s_q) \mu(s_1,\ldots,s_L),\quad \text{s.t.}\ \ (\mathrm{Pr}_p) \# \mu = \mu_p^{(1)}, \ p=1,\ldots,L.
\end{equation}
Here $\mu : \mathcal{X} \ra \R$ can be thought of as an $L$-tensor whose $p$-th index ranges from $1,\ldots,N_p$. Again, the objective function of such a MMOT problem can be rephrased in terms of the 2-marginals:
\begin{equation}
\min_{\mu \in \mathcal{P}(\mathcal{X})} \sum_{p \neq q}^L \Tr(C_{pq}\mu^{(2)}_{pq}),\quad \text{s.t.}\ \ (\mathrm{Pr}_p) \# \mu = \mu_p^{(1)}, \ p=1,\ldots,L,
\end{equation}
where the 2-marginals $\mu^{(2)}_{pq}$ are here implicitly defined in terms of the optimization variable $\mu$.

Then we introduce the \emph{2-marginal primal SDP}
\begin{eqnarray}
& \underset{M \in \R^{N_{\mathrm{tot}}\times N_{\mathrm{tot}} }}{\mathrm{minimize}} \ \ & \Tr(CM) \label{eqn:sdp2marGeneral} \\
&\text{subject to}\ \ & \ M \succeq 0, \cr
&\ & \ M_{pq} \geq 0 \,\text{ for all }\, p,q=1,\ldots,L\ (p\neq q), \cr
&\ & \ M_{pq} \mathbf{1}_{N_q} = \mu_p^{(1)} \,\text{ for all }\, p,q=1,\ldots,L\ (p\neq q), \cr
&\ & \ 
M_{pp} = \mathrm{diag}(\mu_p^{(1)}) \,\text{ for all }\, p=1,\ldots,L. \nonumber
\end{eqnarray}
Here $N_{\mathrm{tot}} := \sum_{p=1}^L N_p$ and $\mathbf{1}_k$ denotes the vector of ones of length $k$. The dual of 
\eqref{eqn:sdp2marGeneral} is given by 
\begin{eqnarray}
& \underset{ Y, \, \{\phi_{pq}, \psi_{pq} \}_{p< q}}{\mathrm{maximize}} \ \ & 
2 \sum_{p<q} \left( \phi_{pq} \cdot \mu_p^{(1)} + \psi_{pq} \cdot \mu_q^{(1)} \right) 
- \sum_{p,s} Y_{pp}(s,s)\mu_p^{(1)}(s)
\label{eqn:sdp2marDualGeneral} \\
&\text{subject to}\ \ & \ Y \succeq 0, \nonumber  \\
&\ & \ \phi_{pq}\mathbf{1}_{N_q}^\top + \mathbf{1}_{N_p} \psi_{pq}^\top \leq C_{pq} - Y_{pq}
 \,\text{ for }\, p<q. \nonumber
\end{eqnarray}
In \eqref{eqn:sdp2marDualGeneral} is it understood that $Y\in \R^{N_{\mathrm{tot}}\times N_{\mathrm{tot}}}$ and 
moreover $\phi_{pq} \in \R^{N_p}$, $\psi_{pq} \in \R^{N_q}$.

By generalizing the discussion of Theorem \ref{thm:strongDuality}, we have strong duality for the 2-marginal SDP, hence 
the optimal values of \eqref{eqn:sdp2marGeneral} and \eqref{eqn:sdp2marDualGeneral} are equal, and moreover the dual 
problem admits a maximizer. (The primal problem admits a maximizer trivially because the feasible set is compact.)

Finally, we turn to the \emph{3-marginal primal SDP}
\begin{eqnarray}
&\underset{M \in \R^{N_{\mathrm{tot}}\times N_{\mathrm{tot}}},\, K\in \R^{N_{\mathrm{tot}} \times N_{\mathrm{tot}} \times N_{\mathrm{tot}}}  }{\mathrm{minimize}} \ \ & \Tr(CM) \label{eqn:sdp3marGeneral} \\
&\text{subject to}\ \ & \ M \succeq 0, \cr
&\ & \ M_{pq} \geq 0 \,\text{ for } p\neq q , \cr
&\ & \ M_{pq} \mathbf{1}_{N_q} = \mu_p^{(1)} \,\text{ for } p\neq q , \cr
&\ & \ 
M_{pp} = \mathrm{diag}(\mu_p^{(1)}) \,\text{ for all }\, p, \nonumber \\
&\ & \ K \geq 0, \ K \text{ symmetric}, \cr 
&\ & \  M_{pq}(s_p,s_q) = \sum_{s_r} K_{pqr} (s_p,s_q, s_r) \,\text{ for } p, q, r \,\text{distinct}. \nonumber
\end{eqnarray}
For simplicity we omit the concrete formulation of the corresponding dual problem, but we note that strong duality 
and dual attainment can be proved by methods similar to those applied in the 2-marginal case.

\section{Numerical results}
\label{sec:numeric}
In this section, we numerically demonstrate the effectiveness of the proposed methods on solving for the ground state energy of Hubbard-type models. The Hubbard  model is a prototypical model for strongly-correlated quantum systems, and is considered to be of significant importance to model behaviors such as high temperature superconductivity \cite{raghu2010superconductivity}. To investigate the numerical performance of the SDP formulation, we consider one-dimensional spinless Hubbard models and two-dimensional spinful Hubbard models. 

\subsection{One-dimensional spinless model}

Here we consider a 1D spinless Hubbard-like model defined by the Hamiltonian
of Eq.~\eqref{eqn:hamiltonian}, in which we take 
\begin{equation}
  t_{pq} = \begin{cases} 1 & \text{if }\vert q-p \vert = 1,\\ 0 & \text{otherwise} \end{cases}
  \label{}
\end{equation}
and consider two different cases of $v$, with next-nearest neighbor (NN) interaction, 
\begin{equation}
  v_{pq} = \begin{cases} U/2 & \text{if }\vert q-p \vert = 1,\\ U/40 & \text{if }\vert q-p \vert = 2, \\
  0 & \text{otherwise} \end{cases}
  \label{2 hop}
\end{equation}
and next-next-nearest  neighbor interaction (NNNN)
\begin{equation}
\label{3 hop}
  v_{pq} = \begin{cases} U/2 & \text{if }\vert q-p \vert = 1,\\ U/20 & \text{if }\vert q-p \vert = 2, \\
  U/200 & \text{if }\vert q-p \vert = 3, \\ 0 & \text{otherwise.}  \end{cases}
\end{equation}
The reason why we omit the obvious scenario of the nearest neighbor (NN) interaction is that in such a case, we find that our convex relaxation becomes \textit{numerically exact} and hence we consider the case to be not representative. We do not have a proof yet to explain why our convex relaxation scheme can be numerically exact.

We will compare the Kohn-Sham SCE energies yielded by our methods with one another, as well 
as with the exact ground-state
energy~\eqref{eqn:groundstate}, which is computed via exact diagonalization (ED) in 
the \textsf{OpenFermion}~\cite{mcclean2017openfermion} software package. The MMOT 
problems arising in Kohn-Sham SCE and their SDP
relaxations are
solved in \textsf{MATLAB} with the \textsf{CVX} software
package~\cite{grant2008cvx}.

We refer to the exact self-consistent Kohn-Sham SCE solution obtained by solving
the original linear programming (LP) problem for MMOT as the `LP' solution. Hence 
 the tightness of the Kohn-Sham SCE lower bound \eqref{eqn:E_Kohn-Sham SCE}
\emph{itself} can be evaluated by comparing 
the exact energy with the LP energy, while the tightness of our 
SDP \emph{relaxations} of the relevant MMOT problems 
(which, in turn, yield lower bounds for the Kohn-Sham SCE energy) 
can be evaluated by comparing the LP energy with the 2- and 3-marginal 
SDP energies. We refer to these two sources of error, respectively, as the 
`Kohn-Sham SCE model error' and the `error due to relaxation.'

In Figs. \ref{fig:hubbard_1D_2hop}(a) and \ref{fig:hubbard_1D_3hop}(a), we plot $E/U$ with respect to $U$ for $v$ as in Eqs. \eqref{2 hop} and \eqref{3 hop}, respectively. In these experiments, $L=14$ and $N = 9$. The energy differences of the Kohn-Sham SCE solutions from the exact energy are plotted in Figs. \ref{fig:hubbard_1D_2hop}(b) and \ref{fig:hubbard_1D_3hop}(b). It is confirmed numerically 
that the LP energy lower-bounds the exact energy, and in turn the SDP energies lower-bound the LP energy. 
While the 3-marginal SDP lower bound is noticeably tighter than the 2-marginal SDP lower bound, the error due to relaxation is dominated by the Kohn-Sham SCE model error in both cases.

Since the effective potential is of interest in Kohn-Sham DFT, in Fig. \ref{fig:dual potential} we plot the SCE potential \eqref{eqn:SCE_potential} at self-consistency in the case of $v$ as in Eq. \eqref{3 hop}. It can be seen that the 3-marginal SDP performs better than the 2-marginal SDP in this regard, as one might expect. (However, note carefully that although it is guaranteed \emph{a priori} that the 3-marginal SDP provides a lower bound on the \emph{energy} that is at least as tight as that of the 2-marginal SDP, no such comparison is theoretically guaranteed in advance for the effective potential.)

To study the scaling of energy in the thermodynamic limit $L\ra\infty$, in Fig \ref{fig:hubbard_1D_sweepL}(a), we plot $E/U$ as a function of $L$ by fixing $U=5$ and a filling factor of $N/L = 2/3$. In Fig \ref{fig:hubbard_1D_sweepL}(b), we plot the total runtime of our methods on a MacBook Pro with a 2.3GHz Core I5 CPU and 16GB of memory.

\begin{figure}[h]
  \begin{center}
    \subfloat[]{\includegraphics[width=6cm]{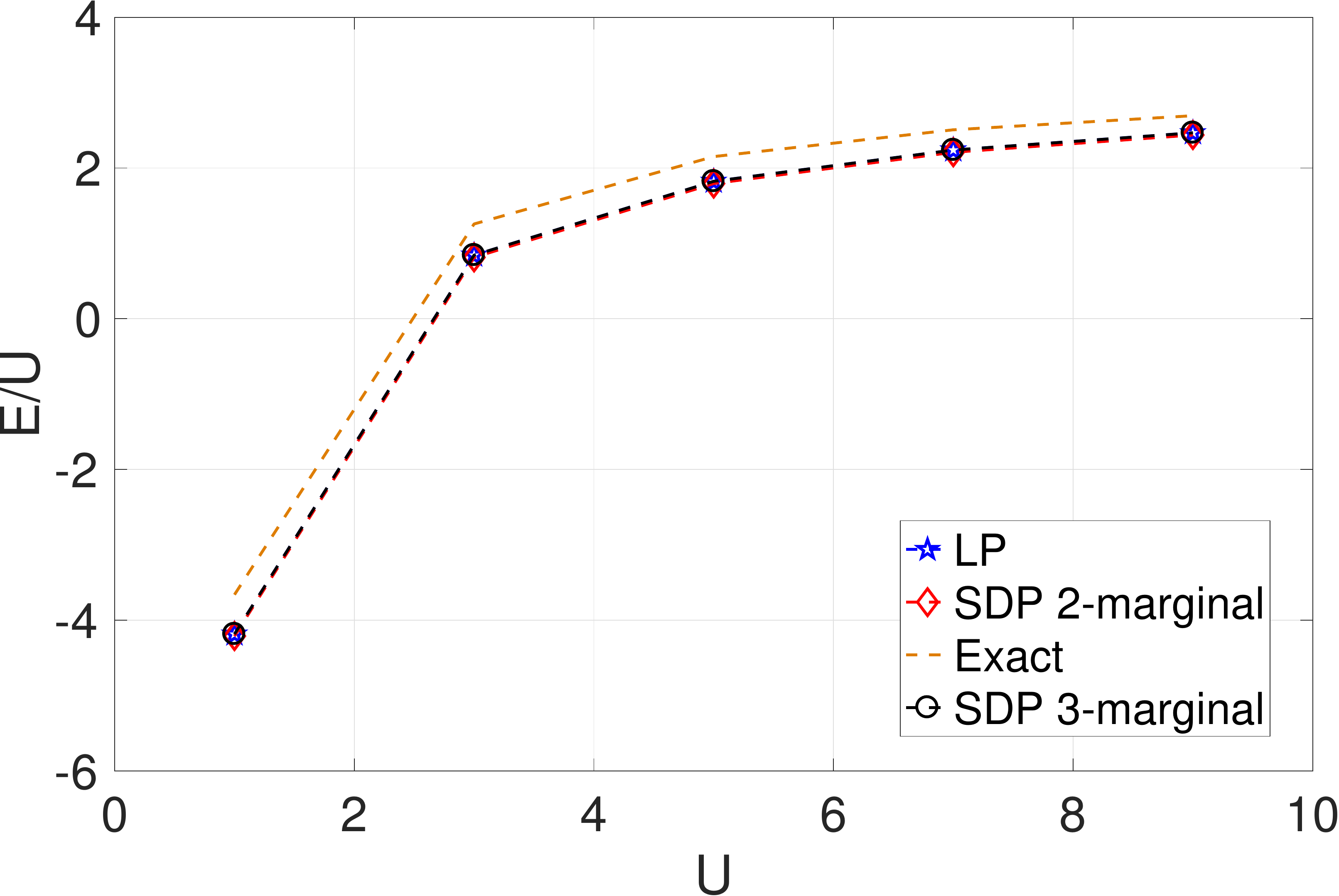}}
    \subfloat[]{\includegraphics[width=6cm]{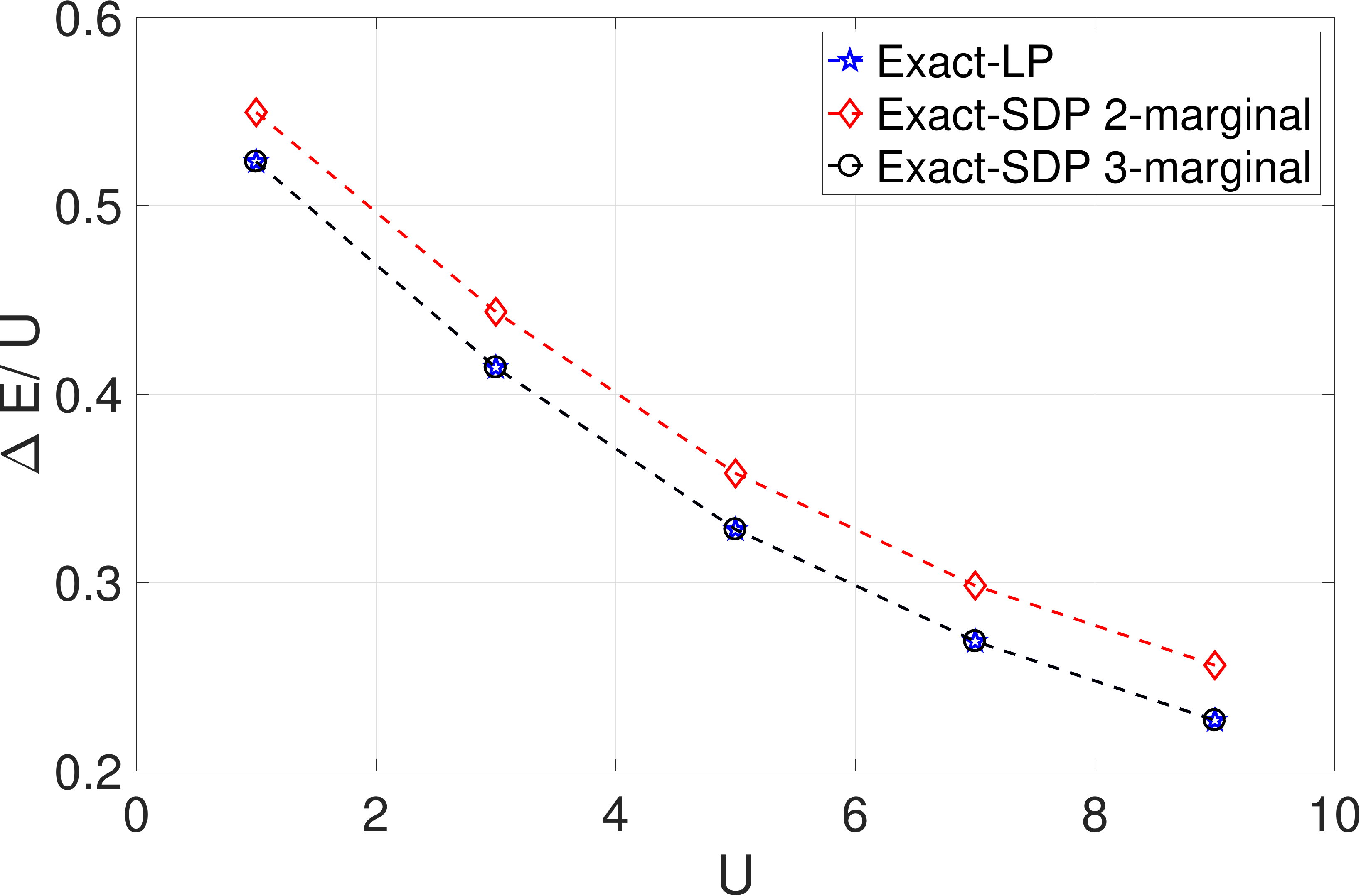}}
  \end{center}
  \caption{Spinless 1D fermionic lattice model with $v$ as in Eq. \eqref{2 hop}, $L=14$, $N = 9$. (a) $E/U$ as a function of $U$. (b) Difference between the exact energy and the Kohn-Sham SCE energies obtained from the unrelaxed LP and the SDP relaxations.}
  \label{fig:hubbard_1D_2hop}
\end{figure}

\begin{figure}[h]
  \begin{center}
    \subfloat[]{\includegraphics[width=6cm]{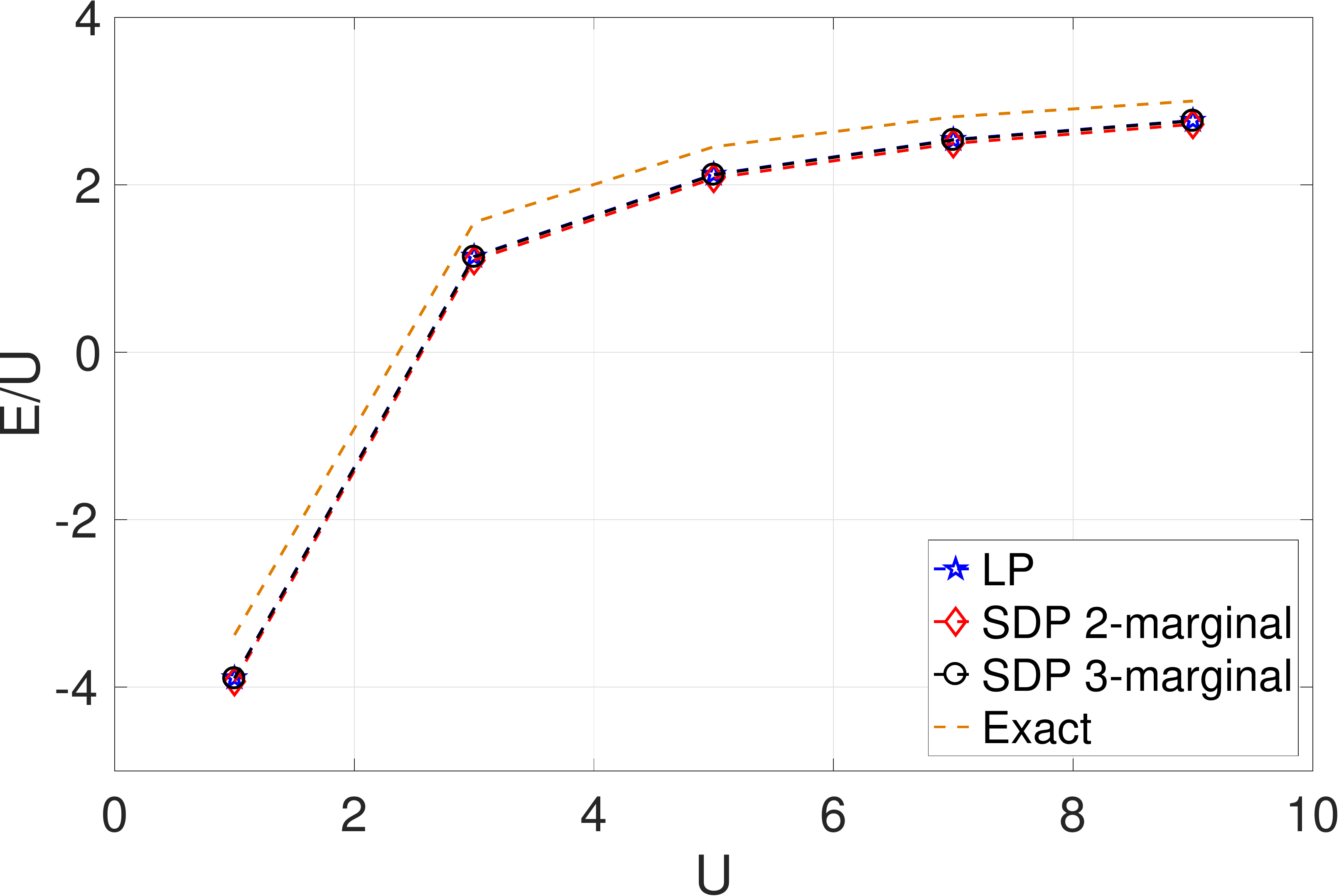}}
    \subfloat[]{\includegraphics[width=6cm]{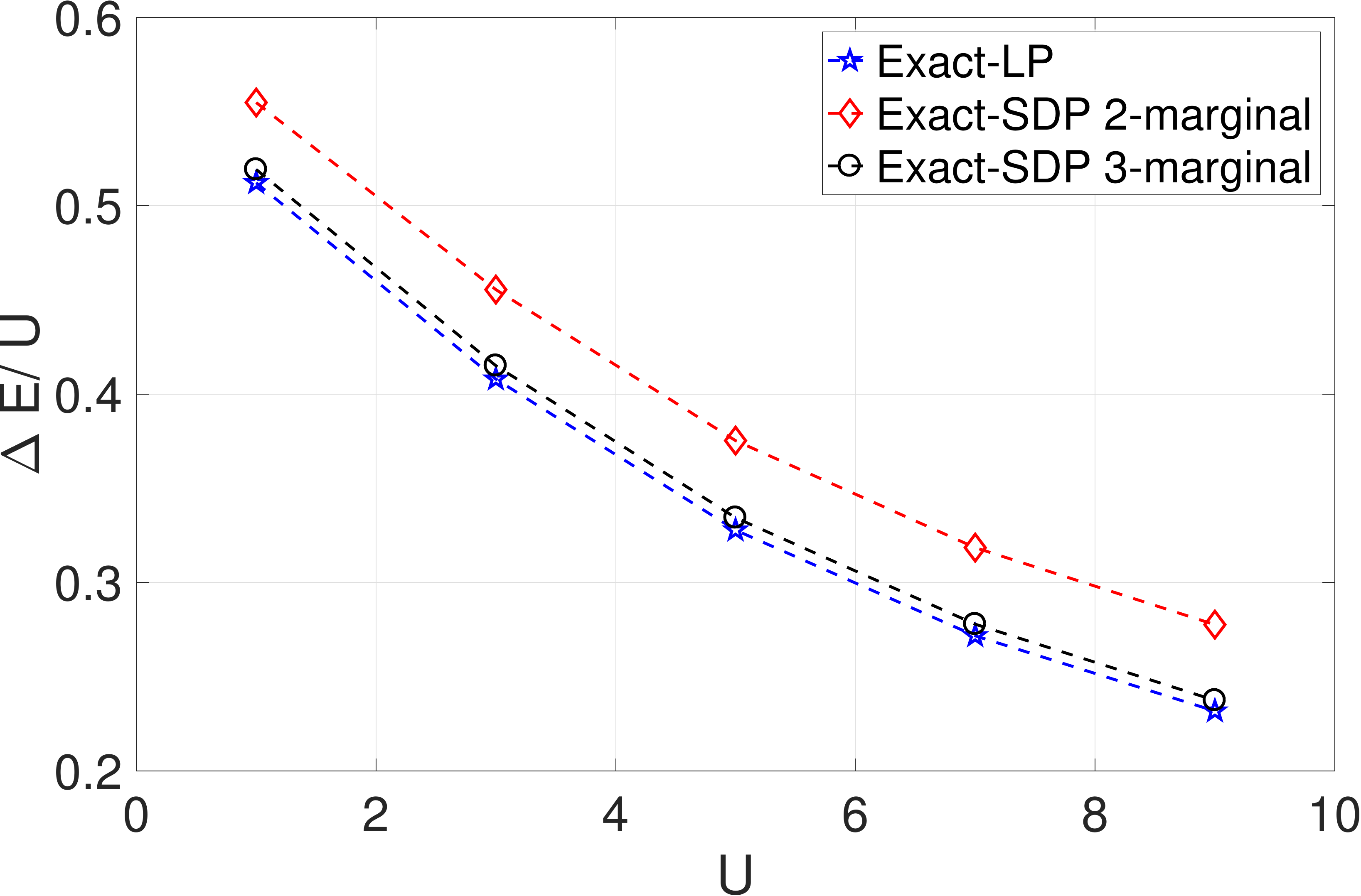}}
  \end{center}
  \caption{Spinless 1D fermionic lattice model with $v$ as in Eq. \eqref{3 hop}, $L=14$, $N = 9$. (a) $E/U$ as a function of $U$. (b) Difference between the exact energy and the Kohn-Sham SCE energies obtained from the unrelaxed LP and the SDP relaxations.}
  \label{fig:hubbard_1D_3hop}
\end{figure}

\begin{figure}[h]
  \begin{center}
    \includegraphics[width=6cm]{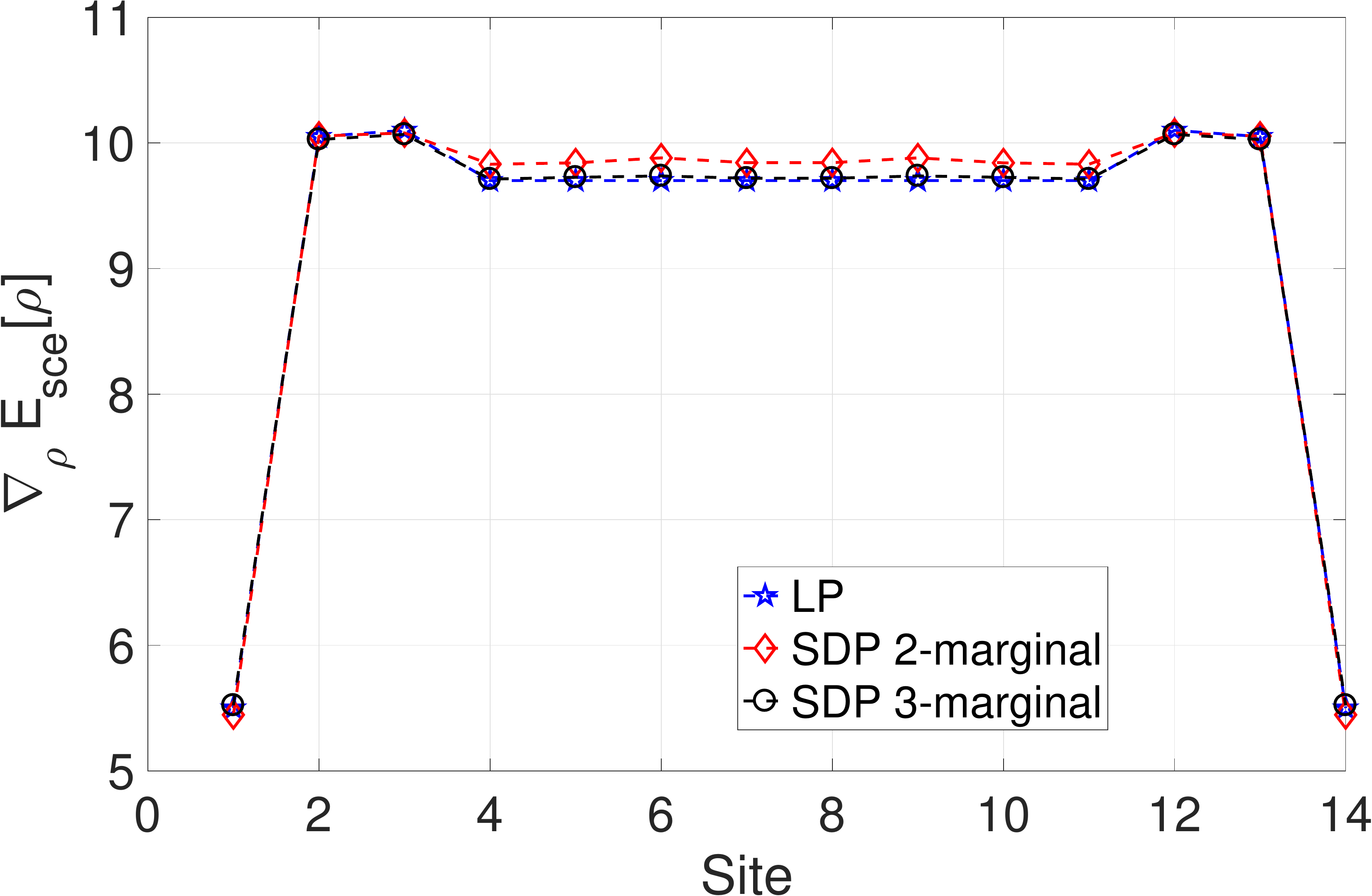}
  \end{center}
  \caption{The effective potential for the spinless 1D fermionic lattice model with $v$ as in Eq. \eqref{3 hop}, $U=5$, $L=14$, $N=9$. The relative $\ell^2$ errors for the 2- and 3-marginal formulations (compared to the unrelaxed LP formulation) are $1.2 \times 10^{-2}$ and $2.7 \times 10^{-3}$, respectively. }
  \label{fig:dual potential}
\end{figure}

\begin{figure}[h]
  \begin{center}
    \subfloat[]{\includegraphics[width=5.8cm]{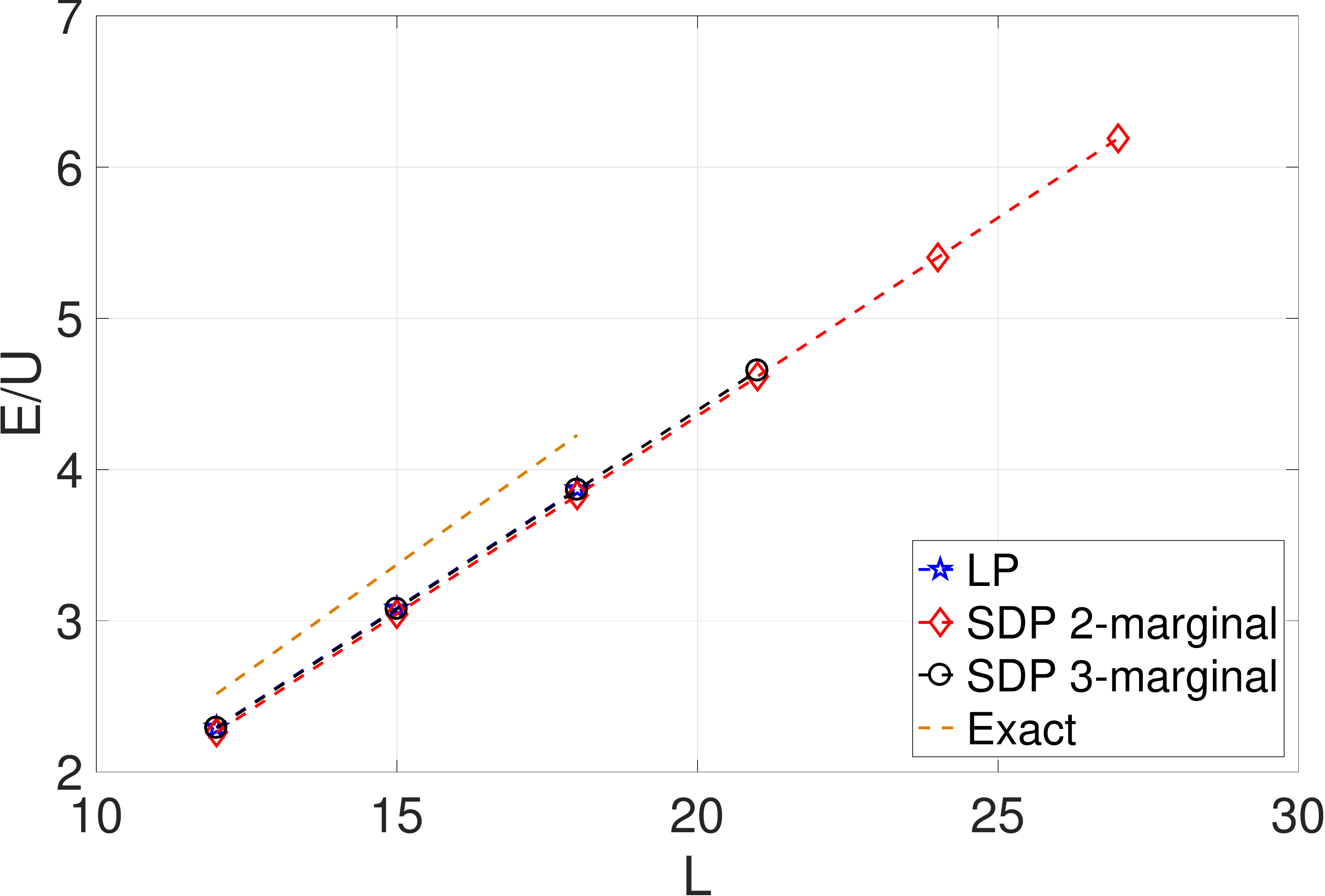}}
    \subfloat[]{\includegraphics[width=6cm]{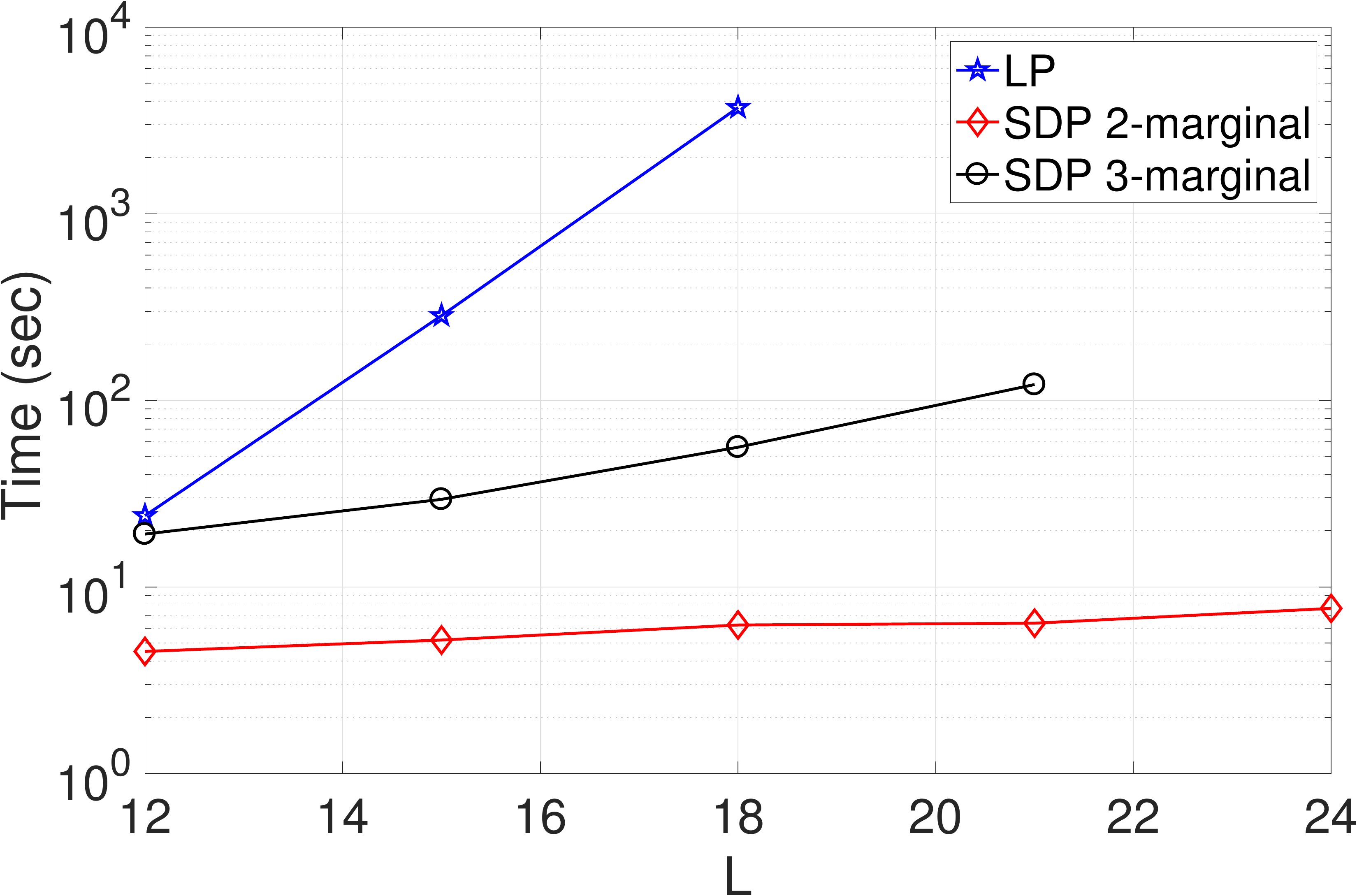}}
  \end{center}
  \caption{Spinless 1D fermionic lattice model with $v$ as in Eq. \eqref{3 hop}, $U=5$, $N/L=2/3$. (a) $E/U$ as a function of $L$. (b) Running time as a function of $L$.}
  \label{fig:hubbard_1D_sweepL}
\end{figure}

\subsection{Two-dimensional spinful model}

We consider a 2D generalized Hubbard type model defined by the Hamiltonian 
\begin{equation}
\begin{split}
     \hat{H} =& -\sum_{i,j=1}^{L-1}
     \sum_{\sigma\in\{\uparrow,\downarrow\}} 
  \left(\hat{a}_{i+1,j;\sigma}^\dagger \hat{a}_{i,j;\sigma} +
  \hat{a}_{i,j+1;\sigma}^\dagger \hat{a}_{i,j;\sigma} + \text{h.c.}\right) \\
  &+ U \sum_{i,j=1}^{L}
  \hat{n}_{i,j;\uparrow} \hat{n}_{i,j;\downarrow} 
   + V \sum_{i,j=1}^{L-1}
  \left(\hat{n}_{i+1,j} \hat{n}_{i,j} + \hat{n}_{i,j+1}
  \hat{n}_{i,j}\right). 
\end{split}
  \label{eqn:generalizedHubbard}
\end{equation}
Here $\hat{n}_{i,j}:=\hat{n}_{i,j;\uparrow}+\hat{n}_{i,j;\downarrow}$. As discussed in section~\ref{sec:prelim}, although the creation and annihilation operators in 
Eq.~\eqref{eqn:generalizedHubbard} involve two spatial indices and one spin index, one may of course order the operators with a single index by defining 
\[
b_{(j-1)L+i}=a_{i,j;\uparrow}, \quad b_{(j-1)L+i+L^2}=a_{i,j;\downarrow}.
\]
The new creation operators are fixed as the Hermitian adjoints of these new annihilation operators. The term associated with $U$ is the on-site electron-electron interaction, while $V$ specifies the nearest-neighbor electron-electron interaction.  In the standard Hubbard model, we have $V=0$. (However, in the case $V=0$, the MMOT problem arising in the SCE framework becomes a trivial problem, since the interaction terms associated with different sites are decoupled.) Fig.~\ref{fig:hubbard_spinful_2D_Lx3Ly3Ne12} 
shows the energies for the generalized Hubbard model on a $3\times 3$
lattice, with $V=0.05\, U$ and $U$ ranging from $1.0$ to $19.0$.  The
number $N$ of electrons is set to be $12$. Here energies are obtained
from the exact solution, the exact Kohn-Sham SCE solution obtained by
linear programming (LP), and the approximate Kohn-Sham SCE solution
obtained via the 2-marginal SDP relaxation. We find that the Kohn-Sham
SCE formulation becomes asymptotically accurate when $U$ becomes large.
Furthermore, the error due to relaxation is much smaller than the
Kohn-Sham SCE model error.
Fig.~\ref{fig:hubbard_spinful_2D_Lx3Ly3Ne12}(b)  further shows that the
energy difference between the LP and 2-marginal SDP solutions is approximately constant with respect to the on-site interaction strength $U$. 
\begin{figure}
  \begin{center}
        \subfloat[]{\includegraphics[width=0.4\textwidth]{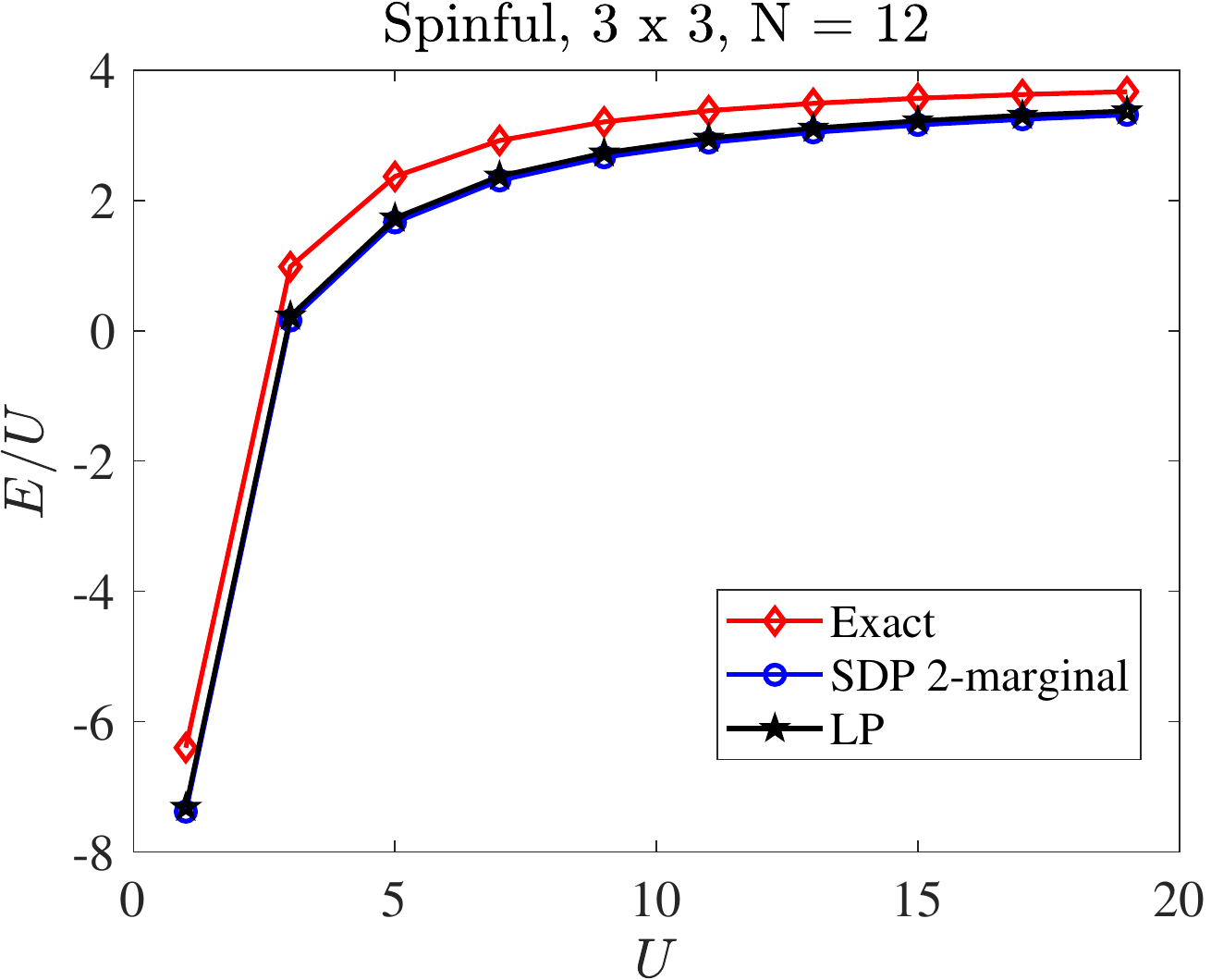}}
        \subfloat[]{\includegraphics[width=0.4\textwidth]{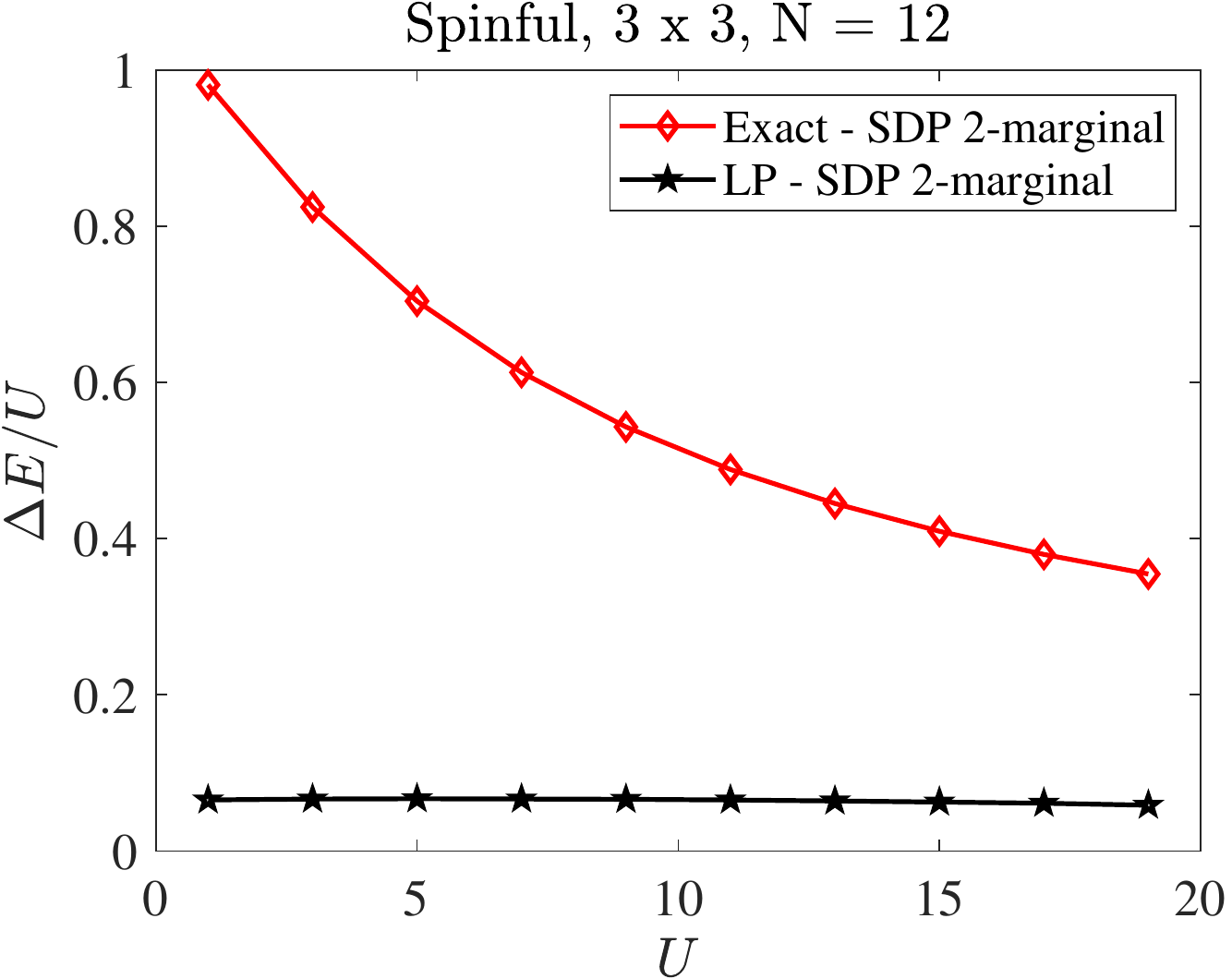}}
  \end{center}
  \caption{Spinful $3\times 3$ Hubbard model with $N=12$.}
  \label{fig:hubbard_spinful_2D_Lx3Ly3Ne12}
\end{figure}

\section{Conclusion}\label{sec:conclusion}

In this paper, we have considered the strictly correlated electron (SCE)
limit of a fermionic quantum many-body system in the second-quantized formalism. To the
extent of our knowledge, the setup of the SCE problem in this setting has not appeared in
the literature. Mathematically, the SCE limit requires the solution of a
multi-marginal optimal transport problem over certain classical
probability measures. We propose a relaxation that enforces constraints on 
the 2-marginals of these measures, and the relaxed problem can be solved
efficiently via semi-definite programming (SDP). We prove that the SDP
problem satisfies strong duality and moreover that the dual solution is attained, 
despite the fact that the primal problem does
not possess a strictly feasible point.  We consider a tighter relaxation involving 
the 3-marginals and discuss how our methods can be applied to completely  
general multi-marginal optimal
transport problems with pairwise costs.

The relaxed formulation is not exact and provides only a lower
bound to the SCE energy. Hence it is meaningful to compare the error
due to relaxation with the Kohn-Sham SCE model error, i.e., the disparity 
between the Kohn-Sham SCE energy and the exact energy of the solution to the 
quantum many-body problem. Our numerical results for various fermionic lattice 
model problems indicate that the former can be much
smaller than the latter, hence our convex relaxation scheme can be considered to be
effective. On the other hand, as indicated
in, e.g.,~\cite{MaletMirtschinkGiesbertzEtAl2014},
Kohn-Sham SCE is only the zero-th order approximation to the quantum
many-body ground state energy in the limit of large interaction. Hence 
the SCE functional and SCE potential should be considered more properly
as an ``ingredient'' for designing more accurate exchange-correlation
functionals. From such a perspective, just as the exact formulation of SCE is only a model, it may
even be appropriate to consider the relaxed SCE formulation as a
model itself. It can capture certain strong correlation effects and can
be solved efficiently.  

One immediate extension of the current work is to include finite-temperature effects via entropic regularization. In fact, entropic
regularization may be relevant for another reason as well. 
During our numerical studies, we observed that the self-consistent iteration for Kohn-Sham SCE 
(\emph{not} the convex optimization problem solved within each iteration) can 
be difficult to converge. The
convergence behavior may depend sensitively on the filling factor, the
lattice size, and the form of the interaction. Such difficulty can arise for both
the exact SCE formulation solved via linear programming and the relaxed formulations 
solved by SDP. Preliminary results show that entropic regularization can help make 
the loop easier to converge. We are not aware of any reports of such issues in the
literature, and we plan to study such behavior more systematically in
future work. 

\section*{Acknowledgments:} 

This work was partially supported by the Department of Energy under
Grant No. DE-SC0017867, No. DE-AC02-05CH11231, by the Air Force Office
of Scientific Research under award number FA9550-18-1-0095 (L.L.), and
by the National Science Foundation Graduate Research Fellowship Program
under grant DGE-1106400 (M.L.). The work of Y.K. and L.Y. is partially supported by the U.S. Department of Energy, Office of Science, Office of Advanced Scientific Computing Research, Scientific Discovery through Advanced
Computing (SciDAC) program, and the National Science Foundation under award DMS-1818449. We thank Kieron Burke, Gero Friesecke, Paola
Gori-Giorgi, and Michael Seidl for helpful discussions, and the anonymous reviewers for pointing us to important references.

\appendix

\section{Background of KS-DFT}
\label{sec: background}

Our goal is to compute the ground-state energy of a fermionic system with $L$ sites where each site has two states. With some abuse of terminology, we will 
refer to fermions simply as electrons.  Also for simplicity we use a single index 
for all of the states, as opposed to using separate site and spin indices in the case of spinful systems. 
Double indexing for spinful fermionic systems can be recovered simply by rearranging indices, e.g., by associating odd state indices with spin-up components
and even state indices with spin-down components.

In the second-quantized formulation, the state space is called
the Fock space, denoted by $\mathcal{F}$. The occupation number basis
set for the Fock space is 
\[
\{\ket{s_1,\ldots,s_L}\}_{s_i \in
\{0,1\},i=1,\ldots,L},
\]
which is an orthonormal basis set satisfying
\begin{equation}
\braket{s_{i_1},\ldots,s_{i_L} \vert s_{j_1},\ldots,s_{j_L}} =
\delta_{i_1 j_1}\cdots \delta_{i_L j_L}.
\end{equation}
A state $\ket{\psi}\in\mathcal{F}$ will be written as a
linear combination of occupation number basis elements as follows: 
\begin{equation}
  \ket{\psi} = \sum_{s_1,\ldots,s_L\in\{0,1\}}
  \psi(s_1,\ldots,s_L) \ket{s_1,\ldots,s_L}, \quad
  \psi(s_1,\ldots,s_L)\in\CC.
  \label{}
\end{equation}
Hence the state vector $\ket{\psi}$ can be identified with a vector
$\psi\in\CC^{2^{L}}$, and $\mathcal{F}$ is isomorphic to $\CC^{2^{L}}$. We call
$\ket{\psi}$ normalized if the following condition is satisfied:
\begin{equation}
  \braket{\psi\vert\psi} = \sum_{s_1,\ldots,s_L\in\{0,1\}} \vert \psi(s_1,\ldots,s_L)\vert^2 =1.
  \label{eqn:normalization}
\end{equation}
We also refer to $\ket{0}=\ket{0,\ldots,0}$ as the vacuum state.

The fermionic creation and annihilation operators are respectively defined as
\begin{equation}
  \begin{split}
  \hat{a}_p^\dagger \ket{s_1,\ldots,s_L} &= (-1)^{\sum_{q=1}^{p-1}
  s_{q}}(1-s_p)\ket{s_1,\ldots,1-s_{p},\ldots,s_L},\\
  \hat{a}_p \ket{s_1,\ldots,s_L} &= (-1)^{\sum_{q=1}^{p-1} s_{q}}s_p\ket{s_1,\ldots,1-s_{p},\ldots,s_L},\quad p= 1,\ldots,L.
  \end{split}
\end{equation}
The number operator defined as $\hat{n}_{p}:=\hat{a}^{\dagger}_{p} \hat{a}_{p}$ satisfies
\begin{equation}
\hat n_p \ket{s_1,\ldots,s_L} = s_p \ket{s_1,\ldots,s_L},\quad p= 1,\ldots,L.
\end{equation}
The Hamiltonian operator is assumed to take the
following form:
\begin{equation}
\label{eqn:hamiltonian}
\hat{H} = \sum_{p,q=1}^{L} t_{pq} \hat{a}_p^\dagger \hat{a}_q +
\sum_{p=1}^{L} w_{p} \hat{n}_{p} + \sum_{p,q=1}^{L} v_{pq}
\hat{n}_{p} \hat{n}_{q}.
\end{equation}
Here $t\in \CC^{L\times L}$ is a Hermitian matrix, which is often interpreted as the `hopping' term arising from the kinetic energy contribution to the Hamiltonian. $w$ is an
on-site term, which can be viewed as an external potential. 
$v\in \CC^{L\times L}$ is also a Hermitian matrix, which may be viewed as representing the 
electron-electron Coulomb interaction. Note that
$\hat{n}_{p}=\hat{a}_p^\dagger \hat{a}_p=\hat{n}_{p} \hat{n}_{p}$,
hence without loss of generality we can assume the diagonal entries
$t_{pp}=v_{pp}=0$ by absorbing, if necessary, such contributions into the on-site
potential $w$. Following the spirit of Kohn-Sham DFT, one could think of 
$t,v$ as fixed matrices, and of the external potential $w$ as a contribution that may change
depending on the system (in the context of DFT, $w$ represents the electron-nuclei interaction and is therefore `external' to the electrons).  We remark that the restriction of the form of the two-body interaction  $\sum_{p,q=1}^{L} v_{pq} \hat{n}_{p} \hat{n}_{q}$ is crucial for the purpose of this paper. In particular, we do not consider the more general form $\sum_{p,q,r,s=1}^{L} v_{pqrs} \hat{a}^{\dagger}_{p} \hat{a}^{\dagger}_{q} \hat{a}_{s}\hat{a}_r$ as is done in the quantum chemistry literature when a general basis set (such as the Gaussian basis set) is used to discretize a quantum many-body Hamiltonian in the continuous space. In the discussion below, for simplicity we will
omit the index range of our sums as long as the meaning is clear.

The exact ground state energy $E_0$ can be obtained by the following
minimization problem: 
\begin{equation}
  E_{0} = \inf_{\ket{\psi} \in \mathcal{F} \,:\, \braket{\psi\vert\psi}  = 1}
  \bra{\psi} \hat{H}-\mu \hat{N} \ket{\psi}.
  \label{eqn:groundstate}
\end{equation}
Here the minimizer $\ket{\psi}$ is the many-body ground state
wavefunction, and $\hat{N}:=\sum_{p}\hat{n}_{p}$ is the total number
operator. $\mu$, which is called the chemical potential, is a Lagrange
multiplier chosen so that the ground state wavefunction $\vert\psi\rangle$ has a number of electrons equal to a pre-specified integer $N\in \{0,1,\ldots, L\}$, i.e., such that
\begin{equation}
  \braket{\psi\vert \hat{N} \vert \psi}=N.
  \label{eqn:integer_condition}
\end{equation}
It is clear that $\mu \hat{N}$ is an on-site potential, and without loss of
generality we absorb $\mu$ into $w$, and hence write
$\hat{H}-\mu\hat{N}$ as $\hat{H}$ in the discussion below.

The electron density $\rho\in \RR^{L}$ is defined as
\begin{equation}
  \rho_{p} = \braket{\psi\vert \hat{n}_{p} \vert \psi} =
  \sum_{s_{1},\ldots,s_{L}} \abs{\psi(s_{1},\ldots,s_{L})}^2 s_{p}, \quad
  p=1,\ldots, L,
  \label{eqn:density}
\end{equation}
which satisfies $\sum_{p} \rho_{p} = N$.  Note that
\begin{equation}
  \bra{\psi} \sum_{p} w_{p} \hat{n}_{p} \ket{\psi} = \sum_{p}   w_{p} \rho_{p}
 =: W[\rho].
  \label{}
\end{equation}
Then we follow the Levy-Lieb constrained minimization
approach~\cite{Levy1979,Lieb1983} and rewrite the ground state minimization
problem~\eqref{eqn:groundstate} as follows: 
\begin{equation}
  \begin{split}
    E_{0} =& \inf_{\rho\in\mathcal{J}_{N}} \left\{  \sum_{p} \rho_{p} w_{p} + 
    \left(\inf_{\ket{\psi} \mapsto \rho,
    \ket{\psi}\in\mathcal{F}}  \bra{\psi} \sum_{pq} t_{pq}
  \hat a_p^\dagger \hat a_q + \sum_{pq} v_{pq} \hat
  n_{p} \hat n_{q} \ket{\psi} \right) \right\}\\
  = &\inf_{\rho\in\mathcal{J}_{N}} \{ W[\rho] + F_{\text{LL}}[\rho] \}, 
  \end{split}
  \label{eqn:dft}
\end{equation}
where 
\begin{equation}
F_{\text{LL}}[\rho] := \inf_{\ket{\psi} \mapsto \rho,
    \ket{\psi}\in\mathcal{F}}  \bra{\psi} \sum_{pq} t_{pq}
  \hat a_p^\dagger \hat a_q + \sum_{pq} v_{pq} \hat
  n_{p} \hat n_{q} \ket{\psi}.
  \label{eqn:dft2}
\end{equation}
Here the notation $\psi\mapsto\rho$ indicates that the corresponding infimum is taken over states $\ket{\psi}$ that yield the density $\rho$ in the sense of Eq.~\eqref{eqn:density}, and
the domain $\mathcal{J}_{N}$ of $\rho$ is defined by
\begin{equation}
  \mathcal{J}_{N} := \left\{ \rho \in \R^L \ \left\vert\ \rho\ge 0, \,\sum_{p}
  \rho_{p} = N \right. \right\}.
  \label{}
\end{equation}
Note that the external potential $w$ is only coupled with $\rho$ and is singled out in
the constrained minimization. It is easy to see that for
any $\rho\in\mathcal{J}_{N}$, the set
$\{\ket{\psi}\in\mathcal{F}\,:\,\ket{\psi} \mapsto \rho\}$ is non-empty,
as we may simply choose
\[
\ket{\psi}=\sum_{p} \sqrt{\rho_{p}}
\ket{s^{(p)}_1,\ldots,s^{(p)}_L}, \quad s^{(p)}_{q} = \delta_{pq}.
\]
Therefore the constrained minimization problem~\eqref{eqn:dft} is in fact defined over a nonempty set for all $\rho \in \mathcal{J}_N$.

The functional $F_{\text{LL}}[\rho]$, which is called the Levy-Lieb
functional, is a universal functional in the sense that it depends only
on the hopping term $t$ and the interaction term $v$, hence in
particular is independent of the potential $w$. Once the functional
$F_{\text{LL}}[\rho]$ is known, $E_{0}$ can be obtained by minimization
with respect to a single vector $\rho$ using standard optimization
algorithms, or via the self-consistent field (SCF) iteration to be
detailed below.  The construction above is called the `site occupation
functional theory' (SOFT) or `lattice density functional theory' 
in the physics literature~\cite{SchoenhammerGunnarssonNoack1995,LimaOliveiraCapelle2002,CapelleCampo2013,SenjeanNakataniTsuchiizuEtAl2018,Coe2019}.
To our knowledge, SOFT or lattice DFT often imposes an additional sparsity pattern on the $v$ matrix for the electron-electron interaction, so that the Hamiltonian becomes a Hubbard-type model.  

\subsection{Strictly correlated electron limit}
Using the fact that the infimum of a sum is greater than the sum of infimums, we can lower-bound
the ground state energy in the following way: 
\begin{equation}
  \label{eqn:lower_bound}
  F_{\text{LL}}[\rho] \geq\inf_{\ket{\psi} \mapsto \rho}
    \bra{\psi} \sum_{pq} t_{pq} \hat{a}_p^\dagger
    \hat{a}_q \ket{\psi} + \inf_{\ket{\psi} \mapsto \rho}
    \bra{\psi}\sum_{pq} v_{pq}\hat n_{p} \hat n_{q}
    \ket{\psi} =: T[\rho] + E_{\text{sce}}[\rho],
\end{equation}
where the functionals $T[\rho]$ and $E_{\text{sce}}[\rho]$ are defined via the last equality in the manner suggested by the notation. The first of these quantities is called the kinetic energy, and the second the
strictly correlated electron (SCE) energy. The SCE approximation is obtained
by treating $T[\rho]+E_{\text{sce}}[\rho]$ as an approximation for the
Levy-Lieb functional. Though in general it is only a lower-bound for the
Levy-Lieb functional, this bound is expected to become tight in the
limit of infinitely strong interaction. We do not prove this fact in
this paper (though we demonstrate it numerically below), but we
nonetheless refer to this approximation as the SCE limit by analogy to the
literature on SCE in first quantization~\cite{SeidlPerdewLevy1999,Seidl2007}.

Due to the inequality in
Eq.~\eqref{eqn:lower_bound}, we have in general the following lower bound for the total energy, which we shall call the Kohn-Sham SCE energy: 
\begin{equation}
  E_{0} \ge E_{\textrm{KS-SCE}} := \inf_{\rho\in\mathcal{J}_{N}} \left\{ W[\rho] + T[\rho] + E_{\text{sce}}[\rho]
   \right\}.
  \label{eqn:E_Kohn-Sham SCE}
\end{equation}
The advantage of the preceding manipulations is that now each term in this infimum can be computed. Specifically, $W[\rho]$ is trivial to compute, $T[\rho]$ is defined in terms of a non-interacting many-body problem (i.e., a problem with Hamiltonian only quadratic in the creation and annihilation operators), for which an exact solution can be obtained via the diagonalization of $t$ \cite{NegeleOrland1988}. Finally, as we shall see below the SCE term (and its gradient) can be computed in terms of a MMOT problem (and its dual). Thus in principle, it would be possible to take gradient descent approach for computing the infimum in the definition \eqref{eqn:E_Kohn-Sham SCE} of $E_{\textrm{KS-SCE}}$.

\subsubsection{The Kohn-Sham SCE equations} \label{sec:kssceEq}

In practice, to compute the Kohn-Sham SCE energy we will instead adopt
the self-consistent field (SCF) iteration as is common practice in
Kohn-Sham DFT.  It can be readily checked that $E_{\text{sce}}[\rho]$ is convex with respect to $\rho$. By the convexity of $W[\rho]$, $T[\rho]$, and $E_{\text{sce}}[\rho]$, the expression in Eq.~\eqref{eqn:E_Kohn-Sham SCE} admits a minimizer, which is unique unless the functional fails to be strictly convex. 
We assume that the solution is unique and $E_{\text{sce}}[\rho]$ is differentiable for simplicity, and we derive nonlinear fixed-point equations satisfied by the minimizer as follows. 

For suitable $\rho$, define the SCE potential via 
\begin{equation}
  v_{\text{sce}}[\rho] = \nabla_\rho E_{\text{sce}} [\rho].
  \label{eqn:SCE_potential}
\end{equation}
Now assume that the (unique) infimum in Eq.~\eqref{eqn:E_Kohn-Sham SCE} is obtained at $\rho^\star$, which is then in particular a critical point of the expression
\begin{equation}
W[\rho] + T[\rho] + E_{\text{sce}}[\rho].
\label{eqn:infExp}
\end{equation}
But then $\rho^\star$ is also a critical point of the expression obtained by replacing $E_{\text{sce}}[\rho]$ with its expansion up to first order about $\rho^\star$, which is (modulo a constant term that does not affect criticality) 
\begin{equation}
G[\rho] := W[\rho] + T[\rho] + v_{\text{sce}}[\rho^\star]\cdot \rho = T[\rho] + (w+v_{\text{sce}}[\rho^\star])\cdot\rho.
\label{eqn:Gdef}
\end{equation}
Hence $\cdot$ means the inner product, and we are motivated to try to
minimize $G[\rho]$ over $\rho \in \mathcal{J}_N$.  But we can write
\[
G[\rho] = \inf_{\ket{\psi} \mapsto \rho}
    \bra{\psi} \sum_{pq} h_{pq}[\rho^\star] \hat{a}_p^\dagger
    \hat{a}_q \ket{\psi},
\]
where 
\[
h[\rho] := t + \mathrm{diag}(w+v_{\text{sce}}[\rho]).
\]
Here $\mathrm{diag}(\cdot)$ is a diagonal matrix.  Then
\[
\inf_{\rho \in \mathcal{J}_N} G[\rho] = 
\inf_{\ket{\psi}\in \mathcal{F} \,:\,\langle \psi \vert \psi \rangle =1,\,\langle \psi \vert \hat{N} \vert \psi \rangle = N }
    \bra{\psi} \sum_{pq} h_{pq}[\rho^\star] \hat{a}_p^\dagger
    \hat{a}_q \ket{\psi}.
\]
The latter infimum is a ground-state problem for a non-interacting Hamiltonian and is obtained \cite{NegeleOrland1988} at a so-called Slater determinant of the form
\begin{equation}
  \ket{\psi} = \hat{c}^{\dagger}_{1} \cdots \hat{c}^{\dagger}_{N}
  \ket{0}.
  \label{eqn:slater}
\end{equation}
Here the $c_k^\dagger$ are `canonically transformed' creation operators defined by 
\begin{equation}
  \hat{c}^{\dagger}_{k} = \sum_{p} \hat{a}^{\dagger}_{p} \varphi_{pk},
  \label{}
\end{equation}
where 
$\Phi=[\varphi_1 \cdots \varphi_N ] = [\varphi_{pk}]\in\CC^{L\times N}$
is a matrix whose columns are the $N$ lowest eigenvectors of
$h[\rho^\star]$. We assume the eigenvectors form an orthonormal set,
i.e. $\Phi^* \Phi = I_N$.

Moreover, one may directly compute that the electron density of $\vert\psi\rangle$ as defined in Eq.~\eqref{eqn:slater} is given by
\begin{equation}
  \rho_{p} = \braket{\psi\vert \hat{n}_{p} \vert \psi} = \sum_{k=1}^{N} \abs{\varphi_{pk}}^2,
  \label{eqn:density_slater}
\end{equation}
i.e., $\rho = \mathrm{diag}(\Phi \Phi^*)$.
Hence the optimizer $\rho^\star$ of Eq.~\eqref{eqn:E_Kohn-Sham SCE} solves the Kohn-Sham SCE equations: 
\begin{equation}
  \begin{split}
(t + \mathrm{diag}(w+v_{\text{sce}}[\rho])) \varphi_i &= \varepsilon_i
\varphi_i, \quad  i=1,\ldots, N.\vspace{1 mm} \\
\rho &= \mathrm{diag}(\Phi \Phi^*).
  \end{split}
\label{eqn:KSSCE_eqn 2}
\end{equation}
Here $(\ve_i,\varphi_i)$ are understood to be the $N$ lowest (orthonormal) eigenpairs of the matrix in the first line of Eq.~\eqref{eqn:KSSCE_eqn 2}. Let $\rho_\star$ be a solution to \eqref{eqn:KSSCE_eqn 2}, the total energy can be recovered by the
relation 
\begin{equation}
  E_{\text{KS-SCE}} = \sum_{k=1}^{N} \varepsilon_{k} - \nabla_\rho E_{\text{sce}}[\rho^\star]^T \rho^\star + E_{\text{sce}}[\rho^\star].
\end{equation}
as can be observed by adding back to $G[\rho^\star]$ the constant term
discarded between equations~\eqref{eqn:infExp} and \eqref{eqn:Gdef}.

\subsubsection{The SCE energy and potential}
The problem is then reduced to the computation of $E_{\text{sce}}[\rho]$
and its gradient $v_{\text{sce}}[\rho]$.
To this end, let us rewrite
\begin{equation}\label{eqn:esce_rewrite}
  \begin{split}
    E_{\text{sce}}[\rho] =& \inf_{\ket{\psi} \mapsto \rho}  
  \bra{\psi}\sum_{pq} v_{pq} \hat n_{p} \hat n_{q}
  \ket{\psi} \\
  =&\inf_{\ket{\psi} \mapsto \rho}  \sum_{s_1,\ldots,s_L} \sum_{pq} v_{pq} s_{p} s_{q} \vert \psi(s_1,\ldots,s_L)\vert^2\\
  =&\inf_{\mu \in \Pi(\rho)} \sum_{s_1,\ldots,s_L}
  \sum_{pq} v_{pq} s_{p}  s_{q} \mu(s_1,\ldots,s_L),
  \end{split}
\end{equation}
The last line of Eq.~\eqref{eqn:esce_rewrite} is obtained by considering
$\vert \psi(s_1,\ldots,s_L)\vert^2$ as a
classical probability density $\mu(s_1,\ldots,s_L) \in \Pi(\rho)$. (The marginal condition derives from the condition $\vert\Psi\rangle \mapsto \rho$.)

\bibliographystyle{siam}
\bibliography{sce}

\end{document}